\documentclass[11pt,leqno]{article}
\usepackage{amsthm,amsfonts,amssymb,epsfig,graphics,amsmath,eufrak}
\usepackage[latin1]{inputenc}\relax

\setlength{\evensidemargin}{0in} \setlength{\oddsidemargin}{0in}
\setlength{\textwidth}{6in} \setlength{\topmargin}{0in}
\setlength{\textheight}{8in}

\def\tx{\tilde x}

\newcommand{\bU}{{\bar{U}}}
\newcommand{\sgn}{{\text{\rm sgn}}}

\newcommand{\Trace}{{\text{\rm Trace}}}

\newcommand{\R}{\Re}
\newcommand{\const}{\text{\rm constant}}



\newcommand{\RR}{{\mathbb R}}

\newcommand{\CC}{{\mathbb C}}

\newcommand{\EE }{{\mathbb E}}

\newcommand{\FF}{{\mathbb F}}
\newcommand{\GG}{{\mathbb G}}

\newcommand\cA{{\cal  A}}

\newcommand\cD{{\cal  D}}

\newcommand\cR{{\cal  R}}

\newcommand\cE{{\cal  E}}

\newcommand\cM{{\mathcal M}}

\newcommand\cS{{\mathcal S}}




\def\eps{\varepsilon }

\def\const{\text{\rm constant}}

\def\D{\partial }
\newcommand\adots{\mathinner{\mkern2mu\raise1pt\hbox{.}
\mkern3mu\raise4pt\hbox{.}\mkern1mu\raise7pt\hbox{.}}}

\renewcommand{\div}{{\rm div}}

\newcommand{\Span}{\text{\rm Span\ }}

\newtheorem{theo}{Theorem}[section]
\newtheorem{prop}[theo]{Proposition}
\newtheorem{cor}[theo]{Corollary}
\newtheorem{lem}[theo]{Lemma}
\newtheorem{defi}[theo]{Definition}

\newtheorem{rem}[theo]{Remark}
\newtheorem{rems}[theo]{Remarks}
\newtheorem{exams}[theo]{Examples}

%

\numberwithin{equation}{section}

\begin{document}

\title{\bf Stability of noncharacteristic boundary layers in
the standing-shock limit}

\author{\sc \small 
Kevin Zumbrun\thanks{Indiana University, Bloomington, IN 47405;
kzumbrun@indiana.edu:
Research of K.Z. was partially supported
under NSF grants number DMS-0070765 and DMS-0300487.  }}

\maketitle

\begin{abstract}
We investigate one- and multi-dimensional
stability of noncharacteristic boundary
layers in the limit approaching a standing planar shock wave
$\bar U(x_1)$, $x_1>0$,
obtaining necessary conditions of (i) weak stability of the
limiting shock, (ii) weak stability of the constant layer
$u\equiv U_-:=\lim_{z\to -\infty} \bar U(z)$, and
(iii) nonnegativity of a modified Lopatinski determinant
similar to that of the inviscid shock case.
For Lax $1$-shocks, we obtain equally simple sufficient conditions;
for $p$-shocks, $p>1$, the situation appears to be more complicated.
Using these results, we determine stability of certain isentropic and
full gas dynamical boundary-layers, generalizing earlier work
of Serre--Zumbrun and Costanzino--Humphreys--Nguyen--Zumbrun.
\end{abstract}

\tableofcontents


\section{Introduction}\label{intro}
Consider a boundary layer, or stationary solution,
\begin{equation}\label{profile}
\tilde U=\bU(x_1), \quad \lim_{z\to +\infty} \bU(z)=U_+, \quad
\bU(0)= U_0
\end{equation}
of a hyperbolic--parabolic system of conservation laws on the quarter-space
\begin{equation}\label{sys}
\tilde U_t +  \sum_jF_j(\tilde U)_{x_j} = \sum_{jk}(B_{jk}(\tilde
U)\tilde U_{x_k})_{x_j}, \quad x\in \mathbb{R}^{d}_+ =
\{x_1>0\},\quad t>0,
\end{equation}
$\tilde U,F_j\in \mathbb{R}^n$, $B_{jk}\in\mathbb{R}^{n \times n}$,
$$\tilde U = \begin{pmatrix}\tilde u\\
\tilde v\end{pmatrix}, \quad B=\begin{pmatrix}0 & 0 \\
b_{jk}^1 & b_{jk}^2\end{pmatrix},
\quad \tilde u\in \RR^{n-r}, \, \tilde v\in \RR^{r}, 
$$
with initial data $\tilde U(x,0)=\tilde U_0(x)$ and 
boundary conditions as specified in \eqref{BC} below,
that is noncharacteristic both in the hyperbolic sense 
$$
 \det dF^1(\bar U_+)\ne 0
$$
and with respect to the original (partially) parabolic problem
as described in (H1)--(H3) below.
Such layers occur, for instance, in gas- and magnetohydrodynamics with
inflow or outflow boundary conditions, for example in flow around
an airfoil with micro-suction or blowing; 
see \cite{GMWZ5,YZ,NZ1,NZ2} for further discussion.

As for any gas-dynamical flow, an important question is {\it stability}
of these solutions under perturbation of the initial or boundary data.
This question has been investigated in \cite{GR,MZ1,GMWZ5,GMWZ6,YZ,NZ1,NZ2}
for arbitrary-amplitude boundary-layers using Evans function techniques,
with the result that under quite general circumstances (see model
assumptions below) linearized and nonlinear stability reduce to a
generalized spectral stability condition phrased in terms of the {\it Evans 
function}, a Wronskian associated with the family of eigenvalue
ODE obtained by Fourier transform in the transverse directions
$\tilde x:=(x_2,\dots, x_d)$.
See also the small-amplitude results of
\cite{GG,R3,MN,KNZ,KK} obtained by energy methods.

The Evans function is readily evaluable numerically; see, e.g.,
\cite{CHNZ,HLyZ1,HLyZ2}.
As pointed out in \cite{SZ,CHNZ,GMWZ5}, it is also evaluable analytically
in certain interesting asymptotic limits.
For example, it was shown in \cite{GMWZ5} that the Evans function
converges in the small-amplitude limit as $\bar U$
approaches the constant layer $U\equiv U_+$ to the Evans function
of the constant layer, uniformly on compact sets of frequencies
$\Re \lambda \ge 0$,
and, as a consequence, {\it stability of small-amplitude layers is
determined by stability of the limiting constant layer}.
This result was used in turn to show that noncharacteristic boundary
layers of general symmetric--dissipative systems (defined below) are
spectrally stable in the small-amplitude limit.

A different asymptotic limit considered for special cases
in \cite{SZ,CHNZ} is the
{\it standing shock} limit $X\to +\infty$ in the case 
\begin{equation}\label{shockpiece}
\bar U^X(x)=\hat U(x_1-X),
\quad
\lim_{z\to \pm \infty}\hat U(z)=U_\pm
\end{equation}
that $\bar U$ is the restriction to $x_1>0$ of a standing
shock solution $\hat U(\cdot -X)$.
It is natural to guess that there might be some relation 
between boundary-layer stability in this limit and stability
of the limiting shock wave, and indeed it was shown in \cite{CHNZ}
for the case of isentropic ideal gas dynamics that the boundary
layer Evans function, suitably normalized, converges in the standing-shock
limit to the Evans function of the limiting shock wave 
on compact subsets of frequencies $\Re \lambda \ge 0$,
in complete analogy with the small-amplitude limit.
On the other hand, it was shown in \cite{SZ} for the case of full
(nonisentropic) ideal gas dynamics that boundary layers can in some
parameter regimes be {unstable} in the standing-shock limit
despite stability (see \cite{HLyZ1}) of the limiting shock wave.

In the present paper, we revisit the standing-shock limit in the
general case, obtaining a result subsuming and illuminating
these previous ones. 
Moreover, we carry out our investigations in multi-dimensions, whereas
the analyses of \cite{SZ,CHNZ} were specific to the one-dimensional
case.
Specifically, we show that the boundary-layer Evans function,
suitably normalized,
converges in the standing-shock limit, uniformly on compact subsets
of frequencies $\eta\in \RR^{d-1}$, $\Re \lambda \ge 0$, to the {product}
of the Evans function of the limiting shock wave and
the Evans function of the constant layer $U\equiv U_-$
at the left endstate of the shock.
For symmetric--dissipative systems, this implies by stability
of constant layers that the Evans function can be further renormalized
so as to converge simply to the Evans function of the limiting shock,
similarly as was shown in \cite{CHNZ} in the special one-dimensional
isentropic ideal gas case.

A consequence is that stability of both the limiting shock and
the constant layer $U\equiv U_-$ are {\it necessary conditions}
for stability of boundary layers in the standing-shock limit.
On the other hand, these are not sufficient, as even stable shock
waves have an (one-dimensional) eigenvalue at $\eta=0$, $\lambda=0$ 
due to translation-invariance, whereas stable boundary layers do not.
A further necessary condition, therefore, is nonnegativity of the
{\it stability index} (defined below) counting parity of the number
of (one-dimensional) unstable roots $\eta=0$, $\Re \lambda > 0$,
indicating that the zero eigenvalue of the limiting shock does not
perturb into the positive half-plane.
Negativity of the stability index in the standing-shock limit, 
indicating an odd number of unstable eigenvalues, was what was
shown in \cite{SZ} in order to obtain one-dimensional instability.

Here, we develop these ideas substantially further, determining for 
Lax $1$-shocks a simple and general stability determinant 
$\hat \Delta(\tilde \xi, \lambda, \eta)$ extending
the Lopatinski determinant $\Delta(\tilde \xi, \lambda)$ 
of the inviscid shock case, $\tilde \xi\in \RR$, $\eta\in \RR$,
$\lambda \in \CC$, with $\Re \lambda$, $\eta\ge 0$,
for which nonvanishing on the strictly positive half-space
$\Re\lambda$, $\eta >0$ is necessary and
nonvanishing on the nonnegative half-space $\Re \lambda$, $\eta \ge 0$
together with stability of the limiting shock $\hat U$ and the
constant layer $U\equiv U_-$
is sufficient for stability in the 
standing-shock limit, in one- and multi-dimensions.
We then use this condition to investigate stability in various interesting
situations.
For $p$-shocks, $p>1$, the situation is considerably
more tricky, apparently involving a complicated double limit.

\subsection{Equations and assumptions}

Consider a family $\bar U^X(x)$ of boundary-layers \eqref{shockpiece}
of \eqref{sys} consisting of translations of a standing shock solution
$\hat U$.
Following \cite{GMWZ5,GMWZ6}, we assume that the conservation law \eqref{sys}
can be rewritten in nonconservative form,
after an invertible change of variables $\tilde U\to \tilde W$, as a
quasilinear hyperbolic--parabolic system
\begin{equation}\label{symmsys}
 \tilde A_0(\tilde W)u_t  + \sum_{j=1}^d   \tilde A_j(\tilde W) \D_{j} (\tilde W)    -
 \eps \sum_{j,k= 1}^d \D_{j} \big( \tilde B_{jk}(\tilde W) \D_{k} \tilde W \big) = 0,
\end{equation}
$\tilde A_0$ invertible, with block structure
\begin{equation}
  \label{struc1}
  \tilde A_0(\tilde W)  =  \begin{pmatrix}\tilde A_0^{11}&0 \\\tilde A_0^{21}&\tilde A_0 ^{22}\end{pmatrix},
  \quad
  \tilde B_{jk} (\tilde W) =\begin{pmatrix}0&0 \\0 &\tilde B_{jk}^{22}\end{pmatrix},
  \end{equation}
a corresponding splitting
   $\tilde W = (\tilde w^1, \tilde w^2) \in \RR^{n-r} \times \RR^{r}$,
and decoupled boundary conditions
 \begin{equation}
\label{BC}
\left\{\begin{array}{l}
\Upsilon_1   (\tilde w^1 )_{ | x \in \partial \Omega  }   = g_1(t,x) , \\
\Upsilon_2   (\tilde w^2 )_{ | x \in \partial \Omega }   = g_2(t,x) , \\
\Upsilon_3  (\tilde w,   \D_{x_1} \tilde w^2,   \D_{\tilde x} \tilde w^2 )_{ | x \in \partial\Omega
} = 0 ,
\end{array}\right.
.
\end{equation}
where $\Upsilon_3  (\tilde w, \D_{x_1} \tilde w^2,
\D_{\tilde x} \tilde w^2 ) =
  K_1  \D_{x_1} \tilde w^2 
+  \sum_{j=2}^d  K_j (\tilde w)  D_{x_j} \tilde w^2 $,
$K_1\equiv \const$,
$\dim \Upsilon_1=n-r$ in the inflow case and $0$ in the outflow case
(defined in (H2) just below), and 
$$
\dim \Upsilon_2+\dim \Upsilon_3=r.
$$

We make the following technical hypotheses following 
\cite{Z1,Z3,GMWZ5}.
\medskip

(H0) $F^j, B^{jk}, \tilde A^0, \tilde A^j, \tilde B^{jk}, \tilde
W(\cdot), \tilde g(\cdot,\cdot) \in C^{s}$, $s\ge 2$.
\medskip

(H1)  The eigenvalues of $\sum_j (\tilde A_0^{11})^{-1} \tilde A_j^{11}\xi_j$ are real and semisimple
for all $\xi\ne 0$ in $\RR^d$.
\medskip

(H2) The eigenvalues
of $(\tilde A_0^{11})^{-1}\tilde A_1^{11}$ are either strictly positive or strictly
negative, that is, either $\sigma 
(\tilde A_0^{11})^{-1}\tilde A_1^{11}
\ge \theta_1>0$ 
(inflow case)
or $\sigma 
(\tilde A_0^{11})^{-1}\tilde A_1^{11}
\le -\theta_1<0$ (outflow case). 
\medskip

(H3) $ \Re\sigma
\sum_{jk} b_2^{jk}\xi_j\xi_k \ge
\theta |\xi|^2>0$ for all $\xi \in \RR^n\backslash \{0\}$.

\medskip

(H4) The eigenvalues of $\sum_j dF^j_\pm \xi_j$ 
are real, semisimple, and have constant multiplicity 
with respect to $\xi\in \RR^d$, $\xi\ne 0$.

(H5) The eigenvalues of $dF^1(U_\pm)$ are nonzero.

\medskip
(H6)
$ \R\sigma \left( \sum_j i\xi_j dF^j(U_\pm) - \sum_{j,k}
\xi_j\xi_kB^{jk}(U_\pm)\right) \leq -\theta \frac{|\xi|^2}{1+|\xi|^2} 
$ 
for all $\xi \in\RR^d$, some $\theta>0$.

%

\begin{defi}
\label{defsymm} The system \eqref{sys}, \eqref{symmsys} is symmetric
dissipative at $U_\pm$ if in a neighborhood of $U_\pm$ 
there exists a real matrix $S(\tilde U)$ depending
smoothly on $\tilde U$ such that for
such that for all $\xi \in \RR^d \backslash\{0\}$
$S(\tilde U) \tilde A_0(\tilde U) $ is symmetric definite positive and
block-diagonal,
$S(\tilde U)\sum_j  \tilde A_j (\tilde U) \xi_j $ is symmetric, and 
 $\Re  S(\tilde U) \sum  \tilde B_{jk}(\tilde U) \xi_j\xi_k $ is nonnegative with kernel of dimension $n-r$.
\end{defi}

\textbf{Alternative Hypothesis H4$'.$ }
For systems that are symmetric dissipative at $U_\pm$, we may relax (H4) to: 

(H4') About each $\xi\in \RR^d\setminus \{0\}$,
the eigenvalues of $\sum_j dF^j_\pm \xi_j$ 
(necessarily real and semisimple, by symmetrizability)
are either of constant multiplicity 
or else are totally nonglancing in the sense of \cite{GMWZ6}, Definition 4.3.

\begin{defi}\label{symmhp}
The system \eqref{sys}, \eqref{symmsys} is symmetric
hyperbolic--parabolic if there exists a real matrix $S(\tilde U)$ depending
smoothly on $\tilde U$ such that for
all $\xi \in \RR^d \backslash\{0\}$
the matrix $S (\tilde U) \tilde A_0(\tilde U) $ is symmetric positive definite
and block-diagonal,
$(S(\tilde U)\sum_j  \tilde A_j( \tilde U) \xi_j)^{11} $ is symmetric, 
and the symmetric matrix
 $\Re  S(\tilde U) \sum  \tilde B_{jk}(\tilde U) \xi_j\xi_k $ is nonnegative with kernel of dimension $n-r$.
\end{defi}

\begin{exams}\label{basicexams}
 \textup{Hypotheses (H1)--(H6) are satisfied for standing shocks
of the compressible Navier--Stokes equations with van der Waal 
equation of state, yielding boundary layers for which
the normal velocity of the fluid is nonvanishing
at $U_0$.
This corresponds to the situation of a porous
boundary through which fluid is pumped in or out, in contrast to the
characteristic, no-flux boundary conditions encountered at a solid
material interface for which normal velocity is set to zero.
See \cite{YZ,GMWZ5,NZ1,NZ2} for further discussion of this situation and 
applications to aerodynamics.}

\textup{Hypotheses (H1)-(H6) with  (H4) replaced by (H4')  are
satisfied  for extreme (i.e., $1$- or $n$-family) standing Lax shocks 
of the viscous MHD equations 
with van der Waal equation of state, with similar velocity restrictions 
on the plasma at $U_0$, but fail for intermediate shocks. 
Hypotheses (H1)--(H6) are generically satisfied for viscous MHD
in dimension one, but fail always for viscous MHD in dimensions greater
than or equal to two; see \cite{MZ2,GMWZ5,GMWZ6} for further discussion.
%
Both gas dynamics and MHD equations with van der Waals equation
of state are symmetric hyperbolic--parabolic systems that are
symmetric dissipative at $U_\pm$ for standing shocks connecting 
thermodynamically stable endstates \cite{Z3,GMWZ4}.}
\end{exams}

Finally, regarding the standing shock $\hat U$, 
ordering the eigenvalues of $dF(U_\pm)$ as
$$
a_1^\pm < a_2^\pm \dots < a_n^\pm,
$$
we assume:
\medskip

(H7) Profile $\hat U$ is a transversal viscous Lax $p$-shock, i.e.,
\begin{equation}
  a_{p-1}^-<0<a_p^-, \quad a_{p}^+<0<a_{p+1}^+
  \label{Lax}
\end{equation}
and $\hat U$ is a transversal connection of the standing 
wave ODE with boundary conditions $\tilde U_\pm$ (see \cite{MaZ3} for a 
detailed discussion of the standing wave ODE).  
\medskip

The eigenvalues $a_j^\pm$ correspond to characteristic speeds at $U_\pm$ 
of the associated one-dimensional inviscid system $U_t+F_1(U)_{x_1}=0$,
with $a_p$ the principal characteristic speed associated with the shock.

\subsection{The Evans condition}\label{evanscond}


The linearized eigenvalue equations of
\eqref{sys}, \eqref{BC} about $\bar U$ are
\begin{equation}\label{lind}
 \lambda U = LU :=
\sum_{j,k}( B_{jk} U_{x_k})_{x_j} - \sum_{j}( A_{j} U)_{x_j},
\end{equation}
with homogeneous boundary conditions
\begin{equation}\label{lBC}
\Upsilon'  (   W,   \D_{\tx}w^2, \D_{x_1}   w^2 ) _{|x_1 = 0}
 = 0
\end{equation}
expressed in linearized $\tilde W$-coordinates 
$W:=(\partial \tilde W/\partial \tilde U)(\bar U)U$,
where $W$ and $U$ denote perturbations of $\tilde W$ and $\tilde U$.

Taking the Fourier transform in $\tilde x:=(x_2,\dots,x_d)$, we
obtain a family of eigenvalue ODE
\begin{equation}\label{eigensys}\begin{aligned}
\lambda U=L_{\tilde \xi }U:=\overbrace{(B_{11}U')'-(A_1U)'}^{L_0U} - &i\sum_{j\not=1}A_j\xi_jU +
i\sum_{j\not=1}B_{j1}\xi_jU' \\&+i\sum_{k\not=1}(B_{1k}\xi_kU)' -
\sum_{j,k\not=1}B_{jk}\xi_j\xi_k U
\end{aligned}\end{equation}
with boundary conditions
\begin{equation}\label{eBC}
\Upsilon'  (   W,  i \tilde \xi   w^2, \D_{x_1}   w^2 ) _{|x_1 = 0}
 = 0.
\end{equation}

\subsubsection{The boundary-layer Evans function}\label{BLev}
A necessary condition for linearized stability is weak spectral
stability, defined as nonexistence of unstable spectra $\Re \lambda
>0$ of the linearized operator $L$ about the wave. As described in
Section \ref{const}, this is equivalent to nonvanishing for
all $\tilde \xi\in \RR^{d-1}$, $\Re \lambda>0$ of the {\it Evans
function}
$$
D(\tilde \xi, \lambda)
$$
a Wronskian associated with \eqref{eigensys} with columns consisting
of bases of the subspace of solutions decaying as $x_1\to +\infty$
and the subspace of solutions satisfying the boundary condition
\eqref{eBC}.
Under our hypotheses, the Evans function may be defined to be $C^\infty$ 
away from the origin on $\Re \lambda \ge 0$
with continous limits 
(typically depending on direction) at $(0,0)$ along rays through
the origin; see \cite{GMWZ5,GMWZ6}.

\begin{defi}\label{strongspectral}
\textup{ We define {\it strong spectral}, or {\it uniform
Evans} stability  as
\begin{equation}\tag{D}
|D(\tilde \xi, \lambda)|\ge \theta(C)>0
\end{equation}
for $(\tilde \xi, \lambda)$ on bounded subsets $C\subset \{\tilde
\xi\in \RR^{d-1}, \, \Re \lambda \ge 0\}\setminus\{0\}$. }
\end{defi}

\begin{rem}\label{stabrem}
\textup{
Under assumptions (H0)--(H6), uniform Evans stability implies
linearized and nonlinear stability in both the long time and 
small viscosity limits of general noncharacteristic boundary layers
(not necessarily associated with standing shocks) 
of symmetric hyperbolic--parabolic systems that are symmetric--dissipative
at $U_+$ with Dirichlet boundary conditions
$\dim\Upsilon_3=0$;
see \cite{GMWZ5,GMWZ6,NZ1,NZ2,N2}.
For more general systems and boundary conditions, (D) augmented with a rescaled high-frequency condition has been shown in \cite{GMWZ5,GMWZ6} to imply
stability in the small viscosity limit.
}
\end{rem}


\subsubsection{The shock Evans function}\label{shockev}
Likewise, a necessary condition for linearized stability of the
shock wave $\hat U$ is weak spectral
stability, defined as nonexistence of unstable spectra $\Re \lambda
>0$ of the linearized operator $L$ about the wave, 
or nonvanishing for
all $\tilde \xi\in \RR^{d-1}$, $\Re \lambda>0$ of the {\it shock Evans
function}
$$
\cD(\tilde \xi, \lambda)
$$
a Wronskian associated with \eqref{eigensys} with columns consisting
of bases of the subspace of solutions decaying as $x_1\to +\infty$
and the subspace of solutions decaying as $x_1\to -\infty$.
Under our hypotheses, the shock Evans function may be defined to be $C^\infty$ 
away from the origin on $\Re \lambda \ge 0$ and $C^0$ at the origin,
with first directional derivatives
(typically depending on direction) at $(0,0)$ along rays through
the origin; see \cite{Z3,GMWZ4,GMWZ6}. 

\begin{defi}\label{sstrongspectral}
\textup{ Uniform
Evans stability of a standing shock  is defined as
\begin{equation}\tag{$\cD$}
|\cD(\tilde \xi, \lambda)|\ge \theta(C)>0
\end{equation}
for $(\tilde \xi, \lambda)$ on bounded subsets $C\subset \{\tilde
\xi\in \RR^{d-1}, \, \Re \lambda \ge 0\}\setminus\{0\}$. }
\end{defi}
\begin{rem}\label{shockstabrem}
\textup{
Under assumptions (H0)--(H7), uniform Evans stability ($\cD$) implies
linearized and nonlinear stability in both the long time and 
small viscosity limits of standing shocks of symmetric hyperbolic--parabolic
systems that are symmetric--dissipative at $U_\pm$; see \cite{GMWZ4,Z3,N2}.
}
\end{rem}

\subsection{Main results}\label{results}

\subsubsection{Convergence}\label{subsec:conv}
Denote by 
$D_X(\tilde \xi, \lambda)$ the Evans function associated
with $\bar U^X$ and $D_-(\tilde \xi, \lambda)$ the Evans function associated
with the constant boundary-layer $U\equiv U_-$ at the lefthand
endstate $U_-$ of $\hat U$.
Then, our first main result is as follows.

\begin{theo}\label{conv}
Under assumptions (H0)--(H7),
there exists a continuous nonvanishing function $\beta(\tilde \xi, \lambda,X)$
such that
\begin{equation}\label{eq:conv}
D_X(\tilde \xi, \lambda)\to 
\beta(\tilde \xi, \lambda,X) D_-(\tilde \xi, \lambda)\cD(\tilde \xi, \lambda)
\end{equation}
as $X\to \infty$, uniformly on compact subsets of $\{\tilde \xi\in \RR^{d-1},
\, \Re \lambda \ge 0\}$.
\end{theo}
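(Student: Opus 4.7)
First I would rewrite the eigenvalue equation \eqref{eigensys} associated with $\bar U^X$ as a first-order system $Z' = \mathcal{M}(\bar U^X(x_1),\tilde\xi,\lambda)Z$ on a suitable phase space. Since $\bar U^X(x_1)=\hat U(x_1-X)$ tends exponentially to $U_\pm$ as $x_1-X\to\pm\infty$, the conjugation lemma of M\'etivier--Zumbrun produces, for each $X$, invertible matrix functions $P_\pm^X(x_1,\tilde\xi,\lambda)$, smooth in $(\tilde\xi,\lambda)$ away from the origin and converging exponentially to the identity at $\pm\infty$, so that the substitution $Z=P_\pm^X W$ reduces the system to a constant-coefficient system $W'=\mathcal{M}_\pm W$ on regions where the profile is close to the corresponding endstate. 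I would apply the same lemma to $\hat U$ on $\RR$ to obtain analogous conjugators $\hat P_\pm$ underlying the definition of $\cD$.

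Next, I would partition $[0,\infty)=[0,X/2]\cup [X/2,3X/2]\cup [3X/2,\infty)$. On the first interval $|\bar U^X-U_-| \lesssim e^{-\eta(X-x_1)}$, on the third $|\bar U^X-U_+|\lesssim e^{-\eta(x_1-X)}$, and on the middle one the profile coincides with a fixed translate of $\hat U$ up to exponentially small tails. Within this decomposition I would build two canonical bases: first, $\Phi_+^X$, spanning the solutions decaying at $+\infty$, defined for $x_1$ large by $P_+^X(x_1)e^{x_1\mathcal{M}_+}V_+^{s}$; second, $\Phi_{BC}^X$, spanning the subspace of solutions compatible with the boundary conditions \eqref{eBC}, represented on $[0,X/2]$ as $P_-^X(x_1)e^{x_1\mathcal{M}_-}V_{BC}$, with $V_{BC}$ an $X$-independent initial subspace determined by \eqref{eBC}.

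I would then evaluate the Wronskian determinant $D_X(\tilde\xi,\lambda)=\det\bigl(\Phi_{BC}^X(X),\,\Phi_+^X(X)\bigr)$ at the centre $x_1=X$. Propagating $\Phi_+^X$ backward from $+\infty$ across the transition region is, up to exponentially small errors, equivalent to solving the shock eigenvalue problem on $\RR$; at $x_1=X$ the normalized basis converges to the decaying-at-$+\infty$ basis of the shock problem evaluated at its centre. Propagating $\Phi_{BC}^X$ forward from $0$ to $X$ through the nearly constant-coefficient region multiplies it by $e^{X\mathcal{M}_-}$: components along the stable subspace of $\mathcal{M}_-$ decay exponentially while those along the unstable subspace grow and dominate, and the dominant part aligns precisely with the leading $x_1\to-\infty$ asymptotics of the decaying-at-$-\infty$ basis for the shock problem. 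Combining multilinearity of the determinant with the stable/unstable block decomposition of $\mathcal{M}_-$ yields a factorization
$$
D_X(\tilde\xi,\lambda)=\beta(\tilde\xi,\lambda,X)\,\cD(\tilde\xi,\lambda)\,D_-(\tilde\xi,\lambda)+O(e^{-cX}),
$$
in which the factor $\cD$ arises from pairing the two decaying shock bases, $D_-$ arises from pairing $V_{BC}$ with the stable subspace of $\mathcal{M}_-$ (which is the decaying-at-$+\infty$ subspace for the constant layer $U\equiv U_-$), and $\beta$ is an explicit product of conjugator determinants at $x_1=X$ and scalar exponentials accounting for the unstable growth; in particular $\beta$ is continuous and nonvanishing.

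The hard part will be the neighbourhood of $(\tilde\xi,\lambda)=(0,0)$, where the hyperbolicity of $\mathcal{M}_\pm$ collapses through slow/fast mode coalescence and the exponential dichotomies used above break down. There I would invoke the gap lemma in its tracking form, as extended to the partially parabolic boundary-layer setting in \cite{GMWZ5,GMWZ6}, to extend the construction of $\Phi_\pm^X$ across the non-hyperbolic frequencies, and exploit the directional continuity of $\cD$ and $D_-$ at the origin guaranteed under (H0)--(H7) to upgrade pointwise convergence to uniform convergence on compact neighbourhoods of $(0,0)$. A secondary technical point is to verify that $\beta$ remains continuous and nonvanishing on the whole compact set, which follows from its explicit product form in terms of invertible conjugators and nonzero scalar exponentials.
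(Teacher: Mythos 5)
Your outline away from the origin matches the paper's argument in substance: both use the conjugation lemma to reduce to constant coefficients, decompose the boundary subspace against the stable/unstable splitting of the constant-coefficient matrix at $U_-$, identify the unstable projection coefficient as $D_-$, pair the propagated unstable subspace with the decaying-at-$+\infty$ subspace to produce $\cD$, and absorb the Abel/exponential normalization into $\beta$. Evaluating the Wronskian at $x_1=X$ rather than $x_1=0$ is a cosmetic difference handled by Abel's theorem. For frequencies bounded away from $(0,0)$, your sketch is essentially correct.

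Near the origin there is a real gap. Your plan — invoke the gap lemma in tracking form and then ``upgrade pointwise to uniform convergence by directional continuity'' — does not by itself close the argument, because the relevant exponential rate $\mu^*$ degenerates to zero as $(\tilde\xi,\lambda)\to(0,0)$, so the error terms $O(e^{(\mu_*-\mu^*)X})$ in the factorization do not decay uniformly. Continuity of $\cD$ and $D_-$ alone cannot repair this. The paper instead uses a \emph{structural} fact specific to the shock: by translation invariance, $\hat U'$ belongs to both $T^-\EE^-(0,0)$ and $T^+\EE^+(0,0)$, and this column of $\EE^-$ can be chosen inside the \emph{strongly} unstable subspace of $\GG^-$, whose eigenvalues keep strictly positive real part even at $(0,0)$. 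Therefore that column of the propagated basis grows at a uniform rate $e^{\theta X}$ near the origin, and the coincidence of the first columns of $T^-\EE^-$ and $T^+\EE^+$ at $(0,0)$ makes the leading determinant degenerate. This is what forces $D_X/\beta\to 0$ as $(\tilde\xi,\lambda)\to(0,0)$ and $X\to\infty$, matching the vanishing of $\cD(0,0)$ and yielding uniform convergence across the origin. Without identifying and exploiting this shared $\hat U'$-column, your near-origin argument does not go through.
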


\begin{cor}
Under (H0)--(H7), weak spectral stability of both the constant boundary-layer
$U\equiv U_-$ and the standing shock $\hat U$ are necessary conditions
for stability of $\bar U^X$ in the standing-shock limit $X\to \infty$.
\end{cor}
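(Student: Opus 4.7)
The plan is to derive this corollary as a direct consequence of Theorem \ref{conv} via a standard application of Hurwitz's (or equivalently Rouch\'e's) theorem for analytic functions. Weak spectral stability of $\bar U^X$ amounts to nonvanishing of $D_X(\tilde\xi,\lambda)$ for $\tilde\xi\in\RR^{d-1}$ and $\Re\lambda>0$, so I argue by contrapositive: assume weak spectral stability of either the constant layer $U\equiv U_-$ or of $\hat U$ fails, so there exists $(\tilde\xi_0,\lambda_0)$ with $\Re\lambda_0>0$ at which $D_-(\tilde\xi_0,\lambda_0)\cD(\tilde\xi_0,\lambda_0)=0$, and deduce that $D_X$ must have a zero with $\Re\lambda>0$ for all sufficiently large $X$.

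Fixing $\tilde\xi=\tilde\xi_0$, I view the Evans functions as analytic functions of $\lambda$ in a neighborhood of $\lambda_0$ that lies in the open right half-plane (where the smoothness stated in Sections \ref{BLev} and \ref{shockev} applies and no directional-limit issues at the origin intrude). Since $D_-(\tilde\xi_0,\cdot)\cD(\tilde\xi_0,\cdot)$ is not identically zero, I can choose a closed disk $\overline B\subset\{\Re\lambda>0\}$ about $\lambda_0$ so small that $\lambda_0$ is its only zero in $\overline B$, and so that $|D_-(\tilde\xi_0,\cdot)\cD(\tilde\xi_0,\cdot)|\ge \delta>0$ on $\partial B$. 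Because $\beta$ is continuous and nonvanishing, I obtain a lower bound $|\beta(\tilde\xi_0,\lambda,X)|\ge c>0$ on the compact set $\{\tilde\xi_0\}\times\overline B$, uniformly in $X$ (the required uniformity being inherent in the construction of $\beta$ in the proof of Theorem \ref{conv}, where $\beta$ appears as a bounded, bounded-below normalization of matching bases).

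With this setup I would then apply Rouch\'e's theorem to the pair
\[
f_X(\lambda):=\beta(\tilde\xi_0,\lambda,X)\,D_-(\tilde\xi_0,\lambda)\,\cD(\tilde\xi_0,\lambda),\qquad
D_X(\tilde\xi_0,\lambda)=f_X(\lambda)+\varepsilon_X(\lambda),
\]
where by Theorem \ref{conv}, $\sup_{\lambda\in\overline B}|\varepsilon_X(\lambda)|\to 0$ as $X\to\infty$. On $\partial B$ one has $|f_X|\ge c\delta$, so for all sufficiently large $X$, $|\varepsilon_X|<|f_X|$ on $\partial B$; Rouch\'e then gives that $D_X(\tilde\xi_0,\cdot)$ has at least as many zeros in $B$ (with multiplicity) as $f_X$ does. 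Since $\beta$ is nonvanishing on $\overline B$, this count equals the number of zeros of $D_-(\tilde\xi_0,\cdot)\cD(\tilde\xi_0,\cdot)$ in $B$, which is at least one. Hence $D_X$ has a zero with $\Re\lambda>0$ for all large $X$, violating weak spectral stability of $\bar U^X$, and the corollary follows.

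The only delicate point is the uniform (in $X$) lower bound on $|\beta|$, which is the main obstacle one must extract from the proof of Theorem \ref{conv}; once that is in hand, the deduction of the corollary reduces to the Rouch\'e argument sketched above.
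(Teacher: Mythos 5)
Your argument is correct and is essentially the one the paper implicitly has in mind: the corollary is stated without proof as an immediate consequence of Theorem \ref{conv}, and the standard way to turn the uniform convergence of Evans functions into a statement about zeros is exactly the Hurwitz/Rouch\'e argument you spell out, applied in $\lambda$ with $\tilde\xi=\tilde\xi_0$ fixed, on a small disk in $\{\Re\lambda>0\}$ about a putative unstable zero of $D_-\cD$. Your flagged concern about a uniform-in-$X$ lower bound on $|\beta|$ is easily dispatched by the explicit formula \eqref{beta}: $\beta$ in fact grows exponentially in $X$ on any compact frequency set, so a lower bound is automatic for $X$ large. The one bookkeeping point worth tightening is that the cleanest reading of \eqref{eq:conv} (in light of \eqref{final}) is $D_X/\beta\to D_-\cD$ uniformly; applying Hurwitz to $D_X/\beta$ (which is analytic in $\lambda$ for $\Re\lambda>0$, nonvanishing exactly where $D_X$ is) avoids having to separately control the size of $\beta$ and of the difference $D_X-\beta D_-\cD$, and makes the argument manifestly robust.
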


\begin{rems}\label{hfrems}
\textup{
1. Under the stronger definition of uniform Evans stability defined
in \cite{GMWZ5,GMWZ6} involving also a rescaled high-frequency condition,
and assuming (H0)--(H7),
uniform Evans stability of the constant boundary-layer
$U\equiv U_-$ and the standing shock $\hat U$ are also {\it sufficient} 
conditions
for spectral stability of $\bar U^X$ in the standing-shock limit $X\to \infty$
for frequencies uniformly bounded away from the origin $(\tilde \xi, \lambda)=(0,0)$.
For intermediate frequencies $R^{-1}\le |(\tilde \xi, \lambda)|\le R$,
this is an immediate consequence of Theorem \ref{conv}.
For high frequencies $|(\tilde \xi, \lambda)|\ge R$,
$R>0$ sufficiently large, it follows from the fact established in
Section 3.2 \cite{GMWZ5} that high-frequency stability
is equivalent to stability of the constant layer $U\equiv U_0$,
and the fact that $U^X_0\to U_-$ as $X\to \infty$.
That is, {\it assuming stability of $\hat U$ and $U\equiv U_-$,
unstable frequencies, if they occur, must converge to the origin
as $X\to \infty$,} with no additional assumptions on the system \eqref{sys}.
}

\textup{
2. It is shown in Corollary 1.29 \cite{GMWZ5} that constant boundary-layers
of symmetric--dissipative systems are uniformly Evans stable (under
the stronger definition of \cite{GMWZ5,GMWZ6}) for Dirichlet boundary
conditions. 
Thus, {\it if \eqref{sys}
is symmetric--dissipative at $U_-$ and the boundary conditions are
Dirichlet-type, then the constant layer $U\equiv U_-$
is stable}, with the implications above.
In the general case, stability or instability of the constant 
layer $U\equiv U_-$ may be determined by a linear algebraic computation, 
since the constant-coefficient eigenvalue ODE is explicitly soluble 
for each frequencies $(\tilde \xi$, $\lambda)$; see \cite{GMWZ5} for
further discussion.
}
\end{rems}

\subsubsection{One-dimensional stability}
Due to translational invariance, $\cD(0,0)=0$, and so 
we cannot conclude nonvanishing of $D_X$ near the origin 
as $X\to \infty$ from the convergence result \eqref{eq:conv}.
Indeed, it is possible that the zero of $\cD$ at the origin
may perturb into the unstable half-plane $\Re \lambda>0$
for boundary layers with $X$ large, yielding instability.
In the one-dimensional setting, this may be dectected by
the {\it stability index}
$$
\Gamma:=
\sgn \lim_{\lambda\to 0^+ \, \hbox{\rm real}} D(0,\lambda)
\times
\lim_{\lambda\to +\infty \, \hbox{\rm real}} \sgn D(0,\lambda),
$$
where $D$ is chosen with a standard normalization guaranteeing
that it is real for real $\lambda$ and $\tilde \xi=0$.
The stability index 
is well-defined by the properties that $D$ is continuous along
rays at the origin and nonvanishing for real $\lambda$ sufficiently
large; see \cite{GZ,SZ,Z3} for further discussion.
Negativity of $\Gamma$, by the Intermediate Value Theorem, implies existence
of a real positive root $D(0,\lambda_*)=0$, hence
{\it one-dimensional instability}.

Observing that stability of the constant layer $U\equiv U_-$
and the limiting shock $\hat U$
imply that $\sgn D_-(\lambda)$ and $\sgn \cD(\lambda)$ are
constant for real $\lambda\ge 0$,
we thus obtain by convergence, \eqref{eq:conv},
that a necessary condition for stability in the standing shock
limit $X\to +\infty$, assuming stability of the constant layer,
is nonnegativity of 
\begin{equation}\label{form}
\hat\Gamma:=
\lim_{X\to +\infty}
\sgn \lim_{\lambda\to 0^+ \, \hbox{\rm real}} 
\frac{D_X(0,\lambda)} {D_-(0,\lambda)}
\times
\lim_{\lambda\to 0^+ \, \hbox{\rm real}} \sgn \cD(0,\lambda),
\end{equation}
provided these limits exist.

A standard shock stability result \cite{GZ,ZS,Z3} is that,
with appropriate normalization,
\begin{equation}\label{shocknorm}
\begin{aligned}
\lim_{\lambda\to 0^+ \, \hbox{\rm real}} \sgn \cD(0,\lambda)&=
\sgn \partial_\lambda \cD(0,0)= \sgn \delta,\\
\delta&:= \sgn\det(R^-, R^+, [U]), 
\end{aligned}
\end{equation}
where $R^-$ and $R^+$ are matrix blocks whose columns span the
stable subspace of $dF_1(U_-)$ and the unstable subspace of $dF_1(U_+)$
and $[U]:=U_+-U_-$ denotes the jump across the shock.
The determinant $\det(R^-, R^+, [U])$ may be recognized
as the {\it Lopatinski determinant} of one-dimensional inviscid
theory, whose nonvanishing is equivalent to 
one-dimensional inviscid shock stability.

Our second main result asserts that the first limit in \eqref{form}
also exists, yielding a necessary stability condition of nonnegativity
of a certain Lopatinski-like determinant $\hat \delta$
relative to the sign of $\delta$. 

\begin{theo}\label{lim}
Assuming (H0)--(H7) and stability of the constant-layer $U\equiv U_-$
and the limiting shock $\hat U$,
$$
\begin{aligned}
\lim_{X\to +\infty} \sgn \lim_{\lambda\to 0^+ \, 
\hbox{\rm real}} 
\frac{D_X(0,\lambda)} {D_-(0,\lambda)} &= \sgn \hat \delta,\\
\hat \delta&:= \sgn \det (\hat R^-,R_+,\hat V)
\end{aligned}
$$
provided $\hat \delta \ne 0$
and $S:= \lim_{z\to -\infty}\frac{\hat U'}{|\hat U'|}(z)$ exists,
where $\hat R^-$ has the dimensions of $R^-$ and $\hat V$ is
a single column vector, hence
\begin{equation}\label{1stabcond}
\sgn \delta \hat \delta=
\sgn \det ( R^-,R_+,[U]) \det (\hat R^-,R_+,\hat V)\ge 0
\end{equation}
is necessary for one-dimensional stability in the standing-shock
shock limit.

For a Lax $1$-shock with Dirichlet boundary conditions,
\begin{equation}\label{1lop}
\delta= \det(R_+, [U]) \; \hbox{\rm and} \;
\hat \delta=\det(R_+, dF_1(U_-) S)
\end{equation}
so that \eqref{1stabcond} becomes
\begin{equation}\label{1Dstabcond}
\sgn \det ( R_+,[U]) \det (R_+,dF_1(U_-)S)\ge 0.
\end{equation}
\end{theo}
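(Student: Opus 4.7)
The plan is to extract the limit by a matched-asymptotic low-frequency analysis of the Evans Wronskian defining $D_X$ at $(\tilde\xi,\lambda)=(0,0)$: identify a distinguished right-decaying column of $D_X$ arising from the shock translation mode, match the remaining columns and the boundary-condition columns against their $D_-$ counterparts, and identify the resulting reduced determinant with the Lopatinski-like quantity $\hat\delta$.

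Write $D_X(0,\lambda)=\det\bigl(\Phi_+^X(0;\lambda),\Psi^X(0;\lambda)\bigr)$ with columns of $\Phi_+^X(x_1;\lambda)$ a basis of solutions of \eqref{eigensys} at $\tilde\xi=0$ decaying as $x_1\to+\infty$ and columns of $\Psi^X$ a basis of solutions satisfying \eqref{eBC}, and similarly for $D_-$ with base state $U\equiv U_-$. At $\lambda=0$ the translation derivative $V_X(x_1):=\hat U'(x_1-X)$ is a solution of the eigenvalue ODE on $[0,\infty)$ decaying exponentially at $+\infty$, satisfying in addition the zero-flux identity $B_{11}V_X'-A_1 V_X\equiv 0$ (inherited from two-sided decay of $\hat U'$ on the full line via the profile ODE $(B_{11}(\hat U)\hat U')'=dF_1(\hat U)\hat U'$); I take $V_X$ as one distinguished column of $\Phi_+^X(\cdot;0)$. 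Since $\bar U^X=\hat U(\cdot-X)\to U_-$ uniformly on bounded sets of $x_1$ as $X\to\infty$, the remaining columns of $\Phi_+^X$ and all columns of $\Psi^X$ can be chosen by a Kato-type continuous selection to converge on compacts to the corresponding columns of $\Phi_+^-$ and $\Psi^-$.

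Next, normalize $V_X$ by $|\hat U'(-X)|>0$ (a positive factor, not affecting signs): by the assumed existence of $S$, $V_X(0)/|\hat U'(-X)|\to S$, and on bounded $x_1\in[0,X/2]$ the rescaled column tends to a growing (as $x_1\to+\infty$) solution of the constant-layer ODE at $U_-$ lying in the unstable subspace with direction $S$ and leading exponential rate $\mu>0$ determined by the profile's approach to $U_-$. In the quotient $D_X(0,0)/\bigl(|\hat U'(-X)|\,D_-(0,0)\bigr)$, the non-distinguished right columns and the BC columns cancel against their $D_-$ counterparts, reducing the problem to a Cramer-type expansion coefficient of the normalized distinguished column along the ``unmatched'' constant-layer direction. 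A standard shock-Evans reduction of Gardner--Zumbrun type, passing to integrated/flux coordinates and using the profile-ODE identity together with $\hat U'(-X)/|\hat U'(-X)|\to S$, identifies this coefficient as $\det(\hat R^-,R_+,\hat V)$ with $\hat V=dF_1(U_-)S$; in the Lax $1$-shock/Dirichlet case $\dim\hat R^-=0$, collapsing the formula to \eqref{1lop}. Thus $\lim_{X\to\infty}\sgn\bigl(D_X(0,0)/D_-(0,0)\bigr)=\sgn\hat\delta$.

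Combining with the classical shock formula \eqref{shocknorm} giving $\lim_{\lambda\to 0^+}\sgn\cD(0,\lambda)=\sgn\delta$ and sign-constancy of $D_-(0,\lambda)$ on real $\lambda\ge 0$ from constant-layer stability, \eqref{form} becomes $\hat\Gamma=\sgn(\delta\hat\delta)$ up to a positive factor, and the IVT/stability-index argument recalled above yields the necessity \eqref{1stabcond}. The main obstacle is the matched-asymptotic reduction in the second step: producing a Kato-type continuous basis $\Phi_+^X$ at $\lambda=0$ with distinguished column $V_X$ and remaining columns converging to the constant-layer basis $\Phi_+^-$ (despite non-analyticity of the right-decaying subspace across the origin), and tracking the normalized distinguished column through the flux reduction to identify its image as exactly $dF_1(U_-)S$ rather than a more elaborate combination involving $B_{11}(U_-)$ and sub-leading profile data --- in direct parallel with the Gardner--Zumbrun derivation of $\partial_\lambda\cD(0,0)=\gamma\det(R^-,R_+,[U])$ for the shock Evans function.
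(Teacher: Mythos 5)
Your proposal identifies the right ingredients — isolate the translation-mode column $V_X=\hat U'(\cdot-X)$ among the $+\infty$-decaying solutions, normalize by $|\hat U'(-X)|$, pass to the limit via the existence of $S$, identify the leading coefficient as $\det(\hat R^-,R_+,\hat V)$ with $\hat V=dF_1(U_-)S$, and finish via \eqref{shocknorm} and the stability-index argument — and you correctly flag the reduction step as the technical crux. But that step, as you describe it, does not hold. You assert that the non-distinguished columns of $\Phi_+^X$ (spanning the $+\infty$-decaying subspace of the boundary-layer problem, i.e.\ $T^+\EE^+$ propagated back to $x_1=0$) ``can be chosen by a Kato-type continuous selection to converge on compacts to the corresponding columns of $\Phi_+^-$'' (the constant-layer decaying subspace, i.e.\ $\FF^-$). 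These are genuinely different subspaces: the decay condition for $\bar U^X$ is imposed near $U_+$, not $U_-$, and propagating $T^+\EE^+$ backward through the near-$U_-$ region aligns it by exponential dichotomy with the \emph{unstable} subspace $\EE^-$ of $\GG^-$, not with $\FF^-$. The proposed column-by-column cancellation with $D_-$, and the resulting Cramer-coefficient interpretation, therefore do not follow — indeed it is precisely this non-cancellation that produces the product $D_-\cD$ in Theorem \ref{conv}, and what Theorem \ref{lim} has to track is the \emph{subdominant} $\FF^-$ contamination at the zero of $\cD$.

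What the paper actually does is decompose the boundary-condition kernel propagated to $x_1=-X$ exactly (no error term) as $\EE^0\,e^{-\tilde\EE^{-*}\GG^-\EE^- X} = T(-X)e^{-\GG^- X}\bigl(\EE^-(0,0)+\FF^-(0,0)\alpha\bigr)$ (equation \eqref{balance}), then invoke the fast/slow mode description of Proposition \ref{Elim} to put $D_X(0,0)/\beta$ into the block-determinant form \eqref{blockdet}, in which $\hat V$ appears as the slow-mode component of the first column of $\FF^-\alpha$, scaled by $|\hat U'(-X)|$. The identification $\hat V = A_1^- S$ in the Lax-$1$/Dirichlet case then comes from matching the form of the first column of $\EE^0$ against the asymptotic forms of $\hat\EE^-_1$ and $\hat\FF^{-,\mathrm{slow}}_1$, using $(A_1^{11},A_1^{12})\hat U'=0$ — it does not drop out of a Gardner--Zumbrun-type flux reduction as you suggest. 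In short, you have the correct answer and the correct list of quantities that control it, but the mechanism in your second step — a convergence and cancellation that is false as stated — needs to be replaced by the exact $\EE^-/\FF^-$ splitting of the boundary data and the slow-mode extraction.
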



\begin{rems}\label{SZrmk}
\textup{1. In the (characteristic) limit $U_-\to U_+$ as the
amplitude of the background shock $\hat U$ goes to zero,
$S\sim [U] \sim r_1^\pm$, where $r_1$ is the eigenvector
of $A_1$ associated with the smallest eigenvalue $a_1$,
and $dF_1(U_-)S\sim A_1^-r_1^-=a_1^-[U]$, where $a_1^->0$
by the Lax shock conditions \eqref{Lax}.
Thus, $\sgn \det ( R_+,[U]) \det (R_+,dF_1(U_-)S)\sim \sgn \det ( R_+,[U])^2>0$,
consistent with stability.
}

\textup{
2. From the boundary-layer ODE, $dF_1(\hat U)\hat U'=(B_{11}(\hat U)\hat U')'$,
giving $A_1(x_1)\hat U'= B_{11}\hat U''$, where $'$ denotes $\partial_{x_1}$.
Thus, $A_1^-S=\alpha B_{11}^- S$, where $\alpha =\lim_{z\to -\infty}
(\hat U'/\hat U)(z)$ is necessarily real and positive if the limit
$S:= \lim_{z\to -\infty} (\hat U'/|\hat U'|)(z)$ exists.
It follows that we may replace \eqref{1Dstabcond} by the equivalent condition
\begin{equation}\label{SZversion}
\det(R_+, [U]) ) \det(R_+, B_{11}(U_-) S)\ge 0,
\end{equation}
which may be recognized as the necessary condition derived 
by a rather different argument in Section 4.1 of \cite{SZ}.
}
\end{rems}

Our third result states that for Lax $1$-shocks 
the necessary conditions we have derived 
are also essentially {\it sufficient} for one-dimensional 
stability in the standing shock limit.

\begin{theo}\label{1lim}
For a Lax $1$-shock with general boundary conditions \eqref{BC}, 
assuming (H0)--(H7), positivity of $\hat \Gamma$ 
together with stability of the constant layer $U\equiv U_-$
and the limiting shock $\hat U$ is {\rm sufficient} for stability
in the standing-shock limit $X\to +\infty$.
\end{theo}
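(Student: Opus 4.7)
The plan is to combine Theorem~\ref{conv} with a local Rouch\'e-type argument, using in an essential way the fact that for a Lax $1$-shock the limiting product $\beta D_- \cD$ has only a single zero near the origin. By Theorem~\ref{conv}, $D_X(0,\lambda) - \beta(0,\lambda,X) D_-(0,\lambda) \cD(0,\lambda) \to 0$ as $X\to\infty$, uniformly on compact subsets of $\{\Re\lambda\ge 0\}$. Stability of the constant layer $U\equiv U_-$ and of the limiting shock $\hat U$, together with continuity and nonvanishing of $\beta$, bound the limiting product away from zero on any compact set that avoids a fixed small disk $|\lambda|<r$ around the origin; hence $D_X(0,\cdot)$ is nonvanishing on such sets for all $X$ large enough.

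Near the origin, uniform Evans stability of $\hat U$ combined with \eqref{shocknorm} gives $\cD(0,\lambda) = \delta\,\lambda + O(\lambda^2)$ with $\delta\ne 0$. Consequently $\beta(0,\cdot,X) D_-(0,\cdot)\cD(0,\cdot)$ has a single simple zero in the closed disk $|\lambda|\le r$ for $r$ sufficiently small, with modulus bounded below by a constant multiple of $r$ on the circle $|\lambda|=r$. Uniform convergence on this circle then lets me apply Rouch\'e's theorem to conclude that for all sufficiently large $X$, $D_X(0,\cdot)$ has exactly one zero, which is simple, in $|\lambda|<r$. Under the standard real-for-real normalization of $D_X$, this simple zero is either real or forms a complex-conjugate pair with another zero; uniqueness forces it to be real.

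To locate this single real zero I will invoke positivity of $\hat\Gamma$. By Theorem~\ref{lim} and continuity of the relevant directional limits in $X$, $\hat\Gamma>0$ implies that for $X$ sufficiently large the signs of $D_X(0,\lambda)$ as $\lambda\to 0^+$ and as $\lambda\to +\infty$ (the latter controlled by the constant-layer asymptotics as in Remark~\ref{hfrems}) agree. Nonvanishing of $D_X(0,\cdot)$ on $[r,+\infty)$ makes the sign constant on that ray, equal to the sign at $\lambda\to 0^+$; a simple real zero inside $(0,r)$ would force a sign change there, contradicting this equality. The unique near-origin zero must therefore lie in $(-r,0]$, giving $D_X(0,\lambda)\ne 0$ for $\Re\lambda>0$ and all $X$ large, which together with the first step yields the desired spectral stability.

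The main obstacle is the local Rouch\'e count: establishing that the limiting product has exactly one simple zero in $|\lambda|\le r$. This rests on $\cD(0,\cdot)$ possessing a single simple zero at the origin, together with nonvanishing of $D_-(0,0)$ and of $\beta(0,0,X)$ uniformly in large $X$, both of which are provided by the stability hypotheses and by Theorem~\ref{conv}. For Lax $p$-shocks with $p>1$, additional slow modes enter the boundary-layer analysis near the origin and destroy this clean ``one simple zero'' picture; this is precisely why the present argument is confined to $p=1$ and the general $p$-shock case requires, as indicated in the Introduction, a considerably more delicate double-limit analysis.
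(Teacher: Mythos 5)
Your proposal is correct and takes essentially the same route as the paper, which deduces the theorem immediately from Lemma~\ref{onezero} (a Hurwitz/Rouch\'e-type count showing $D_X$ has exactly one zero on $\Re\lambda>-\eta$ for $X$ large, hence is stable iff its stability index is positive) together with Theorem~\ref{lim}. One point to make explicit: your Rouch\'e argument on the full circle $|\lambda|=r$ requires uniform convergence and analyticity for $\Re\lambda$ slightly negative, which in the one-dimensional case is supplied by Remark~\ref{analytic} together with the extension of the convergence noted in Remark~\ref{important}.
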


\begin{rems}\label{suffeg}
\textup{
1. It was shown in \cite{HuZ} that
in the limit $U_-\to U_+$, the background shock $\hat U$ is stable.
For symmetric-dissipative systems with Dirichlet boundary conditions,
the constant layer $U\equiv U_-$ is stable.
By Remark \ref{SZrmk}.1, therefore, for Lax $1$-shocks of symmetric-dissipative
systems, with Dirichlet boundary conditions, layers $\hat U^X$ are
one-dimensionally
stable in the standing-shock limit $X\to +\infty$ for $U_+$ fixed and
shock amplitude $|U_+-U_-|$ sufficiently small.
}

\textup{
2.  By Remark \ref{SZrmk}.2, the necessary condition of \cite{SZ},
together with stability of the limiting shock $\hat U'$ and
the constant layer $U\equiv U_-$, is
also sufficient for one-dimensional stability.
For ideal gas dynamics, the numerical study of \cite{HLyZ1} indicates
one-dimensional stability of arbitrary shock waves $\hat U$ for
gas constant $\gamma$ within the physical range $1.2\le \gamma \le 3$
(the only values considered); likewise, $U\equiv U_-$ is stable by
symmetric--dissipativity of the compressible Navier--Stokes equations.
Thus, {\it for ideal gas dynamics with Dirichlet boundary conditions
and $1.2\le \gamma \le 3$, one-dimensional stability 
in the standing Lax $1$-shock limit
is completely decided by the simple algebraic condition \eqref{SZversion} 
of \cite{SZ}}.
}
\end{rems}

For Lax $p$-shocks, $p\ge 2$, the situation is more complicated,
apparently involving a tricky double limit.
In particular, the conditions of Theorem \ref{lim}
are only necessary and not sufficient for stability.

\subsubsection{Multi-dimensional stability}

We restrict now to the case of a Lax $1$-shock, for simplicity
taking pure Dirichlet boundary conditions, $\dim \Upsilon_3=0$.
In multi-dimensions, uniform inviscid stability of a Lax $1$-shock
is defined as
nonvanishing of the multi-dimensional Lopatinski determinant
\begin{equation}\label{mlop}
\Delta(\tilde \xi, \lambda):=
\det \Big( 
\cR_+(\tilde \xi, \lambda),
\lambda [U] + \sum_{j=2}^d i\xi_j [F_j(U)]
\Big),
\end{equation}
on the nonnegative unit sphere $\cS^+:=\{|(\tilde \xi, \lambda)|=1,
\, \Re \lambda \ge 0\}$,
where $\cR_+$ is a matrix blocks whose columns form a basis for
the unstable subspaces of 
\begin{equation}\label{cA}
\cA_+(\tilde \xi, \lambda)
:= \Big(\lambda I + \sum_{j=2}^d i\xi_j dF_j(U_+)\Big)dF_1(U_+)^{-1}
\end{equation}
and $[h(U)]:=h(U_+)-h(U_-)$ denotes jump in $h$ across the shock.
Define the related determinant
\begin{equation}\label{hatDelta}
\begin{aligned}
\hat \Delta(\tilde \xi, \lambda, \eta):=
\det \Big( \cR_+(\tilde \xi, \lambda) ,
\lambda [U] + \sum_{j=2}^d i\xi_j [F_j(U)] + \eta dF_1(U_-) S_- \Big),
\end{aligned}
\end{equation}
$\eta\in \RR$, where $S:=
\lim_{z\to -\infty}\frac{\hat U'}{|\hat U'|}(z)$.
Then, our fourth and fifth main results, giving necessary conditions
and sufficient conditions for multi-dimensional stability
analogous to those of Theorems \ref{lim} and \ref{1lim}, are as follows.

\begin{theo}\label{mnec}
For a Lax $1$-shock and Dirichlet boundary conditions,
assuming (H0)--(H7), 
a necessary condition for stability of $\bar U_X$
in the standing-shock limit $X\to +\infty$ is that
$\hat \Delta$ have no root that is simple with respect to $\lambda$
on the positive half-sphere 
$$
\hat \cS^+:=
\{|\tilde \xi, \lambda, \eta|=1, \, \Re \lambda > 0, \, \eta > 0\},
$$
in the sense that $\hat \Delta=0$ and $\partial_\lambda \hat \Delta\ne 0$.
\end{theo}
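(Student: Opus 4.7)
The plan is to combine the convergence Theorem \ref{conv} with a refined low-frequency asymptotic expansion of $D_X$, valid simultaneously for $|(\tilde\xi,\lambda)|$ small and $X$ large, and then argue by contraposition using a complex-analytic Rouch\'e argument generalizing the real-axis intermediate-value reasoning behind Theorem \ref{lim}. We may assume without loss of generality stability of the constant layer $U\equiv U_-$ and of the limiting shock $\hat U$, since otherwise stability of $\bar U^X$ already fails for $X$ large by Theorem \ref{conv}.

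First I would introduce polar coordinates $(\tilde\xi,\lambda) = \rho(\tilde\xi_0,\lambda_0)$ with $(\tilde\xi_0,\lambda_0)$ on the nonnegative unit sphere and track $D_X$ in the joint limit $\rho\to 0$, $X\to\infty$. The shock profile $\hat U$ approaches $U_-$ at exponential rate $\alpha>0$ in direction $S$, where $\alpha$ is the smallest positive eigenvalue of $B_{11}(U_-)^{-1}dF_1(U_-)$ and $S$ is the corresponding unit eigenvector; hence $\bar U^X$ restricted to $x_1\ge 0$ differs from the constant state $U\equiv U_-$ by a perturbation of size $\sim e^{-\alpha X}$ along $S$. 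Adapting the tracking/conjugation argument that proves Theorem \ref{conv}, I would establish the joint expansion
\begin{equation*}
D_X(\rho\tilde\xi_0,\rho\lambda_0) \;=\; \rho\,\beta_0(\tilde\xi_0,\lambda_0,X)\, D_-(\rho\tilde\xi_0,\rho\lambda_0)\, \hat\Delta\!\Big(\tilde\xi_0,\lambda_0,\tfrac{e^{-\alpha X}}{\rho}\Big) \;+\; O\!\Big((\rho + e^{-\alpha X})^2\Big),
\end{equation*}
with $\beta_0$ continuous and uniformly nonvanishing. In the regime $e^{-\alpha X}/\rho\to 0$ this reduces to the standard factorization $\cD \sim \rho\Delta$ of shock Evans low-frequency theory, and for $\tilde\xi_0=0$ and real positive $\lambda_0$ it reproduces the limit computed in Theorem \ref{lim}. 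The formula uses homogeneity of $\hat\Delta$ of degree one in $(\tilde\xi,\lambda,\eta)$.

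Given the expansion, contraposition proceeds as follows. Suppose $(\tilde\xi_0^*,\lambda_0^*,\eta^*)\in\hat\cS^+$ is a simple zero in $\lambda$ of $\hat\Delta(\tilde\xi_0^*,\cdot,\eta^*)$. Because $\Re\lambda_0^*>0$ and $\hat\Delta$ is analytic in $\lambda$ there (the unstable subspace defining $\cR_+$ varies analytically away from the glancing set), there exists a small closed disk $\overline{D(\lambda_0^*,r)}\subset\{\Re\lambda>0\}$ on whose boundary $|\hat\Delta(\tilde\xi_0^*,\cdot,\eta^*)|$ is bounded below and which contains $\lambda_0^*$ as its unique zero. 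Setting $\rho_X:=e^{-\alpha X}/\eta^*$, the expansion gives that the function
\begin{equation*}
\lambda \;\longmapsto\; \frac{D_X(\rho_X\tilde\xi_0^*,\rho_X\lambda)}{\rho_X\,\beta_0\,D_-(\rho_X\tilde\xi_0^*,\rho_X\lambda)}
\end{equation*}
(with nonvanishing denominator for $X$ large, using stability of $U\equiv U_-$ and continuity of $\beta_0$) converges uniformly on $\overline{D(\lambda_0^*,r)}$ to $\hat\Delta(\tilde\xi_0^*,\cdot,\eta^*)$, with error of order $O(\rho_X)$. Rouch\'e's theorem then produces, for all sufficiently large $X$, a zero of $D_X$ in the image disk $\rho_X\cdot D(\lambda_0^*,r)\subset\{\Re\lambda>0\}$, which is an unstable eigenvalue of $\bar U^X$, contradicting stability.

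The main obstacle is establishing the joint expansion uniformly in the two small parameters $\rho$ and $e^{-\alpha X}$: this requires combining the Gap-Lemma / Kreiss-symmetrizer conjugation at low frequencies (tracking $\rho\to 0$) with the shock--boundary tracking used for Theorem \ref{conv} (tracking $X\to\infty$) so that the two asymptotic regimes glue uniformly and the error estimate $O((\rho+e^{-\alpha X})^2)$ survives the low-frequency blow-up. In particular, identifying the direction $dF_1(U_-)S$ of the $e^{-\alpha X}$ correction requires following how the slow decaying mode $S$ of the standing-wave ODE at $U_-$ propagates across the interval $[0,X]$ and perturbs the boundary-data projection onto the center subspace responsible for the zero of $\cD$ at the origin, and showing that the resulting first-order perturbation matches precisely the $\eta$-column of $\hat\Delta$. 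Once this matching is in hand, the Rouch\'e step is routine, as is verifying that $\beta_0$ inherits continuity and nonvanishing from the analogous factor $\beta$ of Theorem \ref{conv}.
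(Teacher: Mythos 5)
Your proposal is correct and follows essentially the same route as the paper: you parametrize the joint low-frequency / large-$X$ regime by a small parameter $\rho$ with the ratio $\eta_0\sim |\hat U'(-X)|/\rho$ appearing as the third argument of $\hat\Delta$, and you then perturb off a simple $\lambda$-root of $\hat\Delta$ to produce an unstable root of $D_X$. The paper sets $\eta:=|\hat U'(-X)|$ directly and writes $(\tilde\xi,\lambda,\eta)=\rho(\tilde\xi_0,\lambda_0,\eta_0)$; your use of $e^{-\alpha X}$ in place of $|\hat U'(-X)|$ merely rescales $\eta_0$ by a harmless positive constant, so the half-sphere condition is unchanged. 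The one genuine (and arguably favorable) departure is the final step: the paper invokes the Implicit Function Theorem, relying on the statement that the $o(1)$ error is $C^1$ in $(\tilde\xi_0,\lambda_0,\eta_0)$ for each fixed $X$, whereas you run a Rouch\'e argument on a small $\lambda$-disk in $\{\Re\lambda>0\}$. Rouch\'e asks only for uniform $C^0$ smallness of the error together with analyticity of $\hat\Delta$ in $\lambda$ (available away from glancing for $\Re\lambda>0$), which is a slightly cleaner fit for the stated regularity and delivers the same conclusion. Your claimed error bound $O\big((\rho+e^{-\alpha X})^2\big)$ is more quantitative than the paper's $o(\rho)$ and would require additional justification, but it is not needed — the weaker $o(\rho)$ (uniform as $X\to\infty$) suffices for the Rouch\'e step. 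Likewise, the identification of $\alpha$ with a specific eigenvalue of $B_{11}^{-1}dF_1(U_-)$ is an inessential refinement (and slightly delicate given that $B_{11}$ is not invertible in the hyperbolic--parabolic block structure); working directly with $|\hat U'(-X)|$ as the paper does avoids this.
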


\begin{theo}\label{msuff}
For a Lax $1$-shock and Dirichlet boundary conditions,
assuming (H0)--(H7), 
sufficient conditions for stability of $\bar U_X$
in the standing-shock limit $X\to +\infty$ are nonvanishing
of $\hat \Delta$ on the nonnegative half-sphere $\hat \cS^+:=
\{|\tilde \xi, \lambda, \eta|=1, \, \Re \lambda \ge 0, \, \eta \ge 0\}$,
stability of the limiting shock $\hat U$, and stability
of the constant layer $U\equiv U_-$, 
\end{theo}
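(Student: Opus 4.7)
The plan is to establish a uniform positive lower bound on $|D_X(\tilde \xi,\lambda)|$ over all $(\tilde \xi,\lambda)$ with $\Re\lambda \ge 0$, $(\tilde \xi,\lambda)\ne 0$, for $X$ sufficiently large. I split the frequency domain according to $\rho:=|(\tilde \xi,\lambda)|$ into three zones: high ($\rho\ge R$), intermediate ($R^{-1}\le \rho\le R$), and low ($\rho\le R^{-1}$). For high frequencies I would invoke Remark \ref{hfrems}.1: the high-frequency reduction of Section 3.2 of \cite{GMWZ5} equates nonvanishing of $D_X$ with stability of the constant layer $U\equiv U_0^X$, and since $U_0^X=\hat U(-X)\to U_-$ by (H7), the hypothesized stability of $U\equiv U_-$ extends by continuity to all $U_0^X$ for $X$ large, giving the bound uniformly in $X$. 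For intermediate $\rho\in[R^{-1},R]$, Theorem \ref{conv} gives uniform convergence $D_X\to \beta D_-\cD$ on this compact annular set; the stability hypotheses on $\hat U$ and $U\equiv U_-$ make the right-hand side bounded away from zero (note $\cD(\tilde \xi,\lambda)=0$ only at the origin on $\{\Re\lambda\ge 0\}$), so $D_X\ne 0$ for $X$ large.

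The remaining and main work concerns the low-frequency region $\rho\le R^{-1}$, where Theorem \ref{conv} alone is insufficient because $\cD(0,0)=0$ by translational invariance of $\hat U$. Here I would carry out a low-frequency blow-up in the spirit of \cite{GMWZ4,GMWZ6,Z3}: pass to polar coordinates $(\tilde \xi,\lambda)=\rho(\check{\tilde \xi},\check \lambda)$ with $(\check{\tilde \xi},\check \lambda)\in \cS^+$, and track bases of the decaying-at-$+\infty$ subspace and of the boundary-condition subspace along the profile $\hat U(\cdot - X)$ via a conjugation/gap-lemma argument. The first basis, after factoring the algebraic behavior near the origin, contributes the columns $\cR_+(\check{\tilde \xi},\check \lambda)$; the relevant normalization extracts a factor $\rho$ from the translational mode of $\hat U$, and the shock-wave contribution yields, to leading order, the jump column $\check \lambda[U]+\sum_{j\ge 2}i\check\xi_j[F_j(U)]$ of the multi-d Lopatinski determinant \eqref{mlop}. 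The boundary-condition subspace, propagated from $x_1=0$ through the profile, carries an additional contribution of size proportional to $|\hat U'(-X)|$ pointing in the direction $dF_1(U_-)S$; this is the origin of the extra $\eta$-column in \eqref{hatDelta}, with the effective scale being $\eta(X,\rho)=\kappa\,|\hat U'(-X)|/\rho$ for some $\kappa\ne 0$ (the existence of $S$ being guaranteed by (H7)). After factoring out scalar normalizations that are continuous and nonvanishing (using stability of $U\equiv U_-$, which handles the normalization produced by $D_-$), the leading-order expression for $D_X$ on the rescaled unit sphere becomes precisely $\hat \Delta(\check{\tilde \xi},\check \lambda,\check\eta)$ with $(\check{\tilde \xi},\check \lambda,\check\eta)\in \hat \cS^+$. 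The assumed nonvanishing of $\hat \Delta$ on the compact set $\hat \cS^+$ then supplies a uniform positive lower bound, completing the low-frequency estimate.

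The main obstacle will be proving uniformity of this asymptotic expansion in the joint limit $\rho\to 0$, $X\to\infty$ when the ratio $|\hat U'(-X)|/\rho$ is allowed to range over all of $[0,\infty]$: the three cases $\eta\equiv 0$ (pure shock limit, $1/X\ll \rho$), $\eta\equiv 1$ (balanced regime, $1/X\sim \rho$), and $|\eta|\to\infty$ (pure boundary-layer limit, $\rho\ll 1/X$) each admit their own straightforward analyses, but stitching them into a single expansion valid uniformly on $\hat \cS^+$ requires a delicate two-parameter blow-up. The technical engine is the tracking of stable/unstable subspaces of the frozen-coefficient eigenvalue problem at $U_-$ and along the profile, combined with the Kreiss-symmetrizer estimates of \cite{GMWZ5,GMWZ6} to ensure that the lower-order terms neglected in the expansion are controlled uniformly, and with the observation that the pure boundary-layer regime $|\eta|\to\infty$ reduces to the already-known stability of $U\equiv U_-$, ensuring compatibility at the ``boundary at infinity'' of the blow-up.
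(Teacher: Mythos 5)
Your overall strategy matches the paper's: split by frequency magnitude, use Theorem~\ref{conv} on compacta away from the origin, and run a polar blow-up near the origin in which the boundary-condition subspace contributes a column proportional to $|\hat U'(-X)| \, dF_1(U_-)S$ so that the rescaled Evans function converges to $\hat\Delta$. The paper executes the low-frequency blow-up in the proof of Theorem~\ref{mnec} by setting $\eta:=|\hat U'(-X)|$ and $\rho:=|(\tilde\xi,\lambda,\eta)|$ (the \emph{three-dimensional} radius), writing $(\tilde\xi,\lambda,\eta)=\rho(\tilde\xi_0,\lambda_0,\eta_0)$ with $|(\tilde\xi_0,\lambda_0,\eta_0)|=1$; this automatically compactifies the parameter range so that $\eta_0\in[0,1]$, and the proof of Theorem~\ref{msuff} then invokes the estimate \eqref{expansion} with the $o(1)$ remainder uniform and $C^0$ in all of $\rho,\tilde\xi_0,\lambda_0,\eta_0,X$ over $0\le\rho\le Ce^{-\theta X}$, plus a cruder estimate on $Ce^{-\theta X}\le\rho\ll1$ using \eqref{smallness} to show the $\eta$-contribution is negligible and the expansion reduces to the inviscid Lopatinski determinant $\Delta$.

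Two points to correct in your plan. First, you normalize by the two-dimensional radius $|(\tilde\xi,\lambda)|$, so your $\eta\sim|\hat U'(-X)|/|(\tilde\xi,\lambda)|$ is unbounded; it is cleaner to use the paper's three-dimensional normalization, which avoids introducing a noncompact chart and an artificial ``boundary at infinity.'' Second, and more substantively: you claim that the regime $|\eta|\to\infty$ (equivalently $\eta_0\to1$, i.e.\ $(\tilde\xi_0,\lambda_0)\to0$) ``reduces to the already-known stability of $U\equiv U_-$.'' That is not correct. In that limit the leading-order determinant is $\hat\Delta(0,0,1)=\det\bigl(\cR_+(0,0),\,dF_1(U_-)S\bigr)=\hat\delta$ as in \eqref{1lop}, and nonvanishing there is supplied by the hypothesized nonvanishing of $\hat\Delta$ on the closed half-sphere $\hat\cS^+$ (which contains the pole $(0,0,1)$) — not by stability of the constant layer. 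Stability of $U\equiv U_-$ enters only through the factor $D_-$ in the medium-frequency regime (Theorem~\ref{conv}) and, if one tracks the stronger rescaled high-frequency condition, through the high-frequency reduction. If you invoke stability of $U_-$ at the pole rather than $\hat\Delta(0,0,1)\ne0$, the low-frequency argument does not close.
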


\begin{rem}
\textup{
In the one-dimensional case $\tilde \xi\equiv 0$, $\cR_\pm\equiv R_\pm=\const$,
so that $\partial_\lambda \hat \Delta\equiv \delta$.
It is readily seen that Theorems \ref{mnec} and \ref{msuff}
reduce in this context
to the restrictions of Theorems \ref{lim} and \ref{1lim} to
the case of a Lax $1$-shock and Dirichlet boundary conditions, 
provided that the limiting shock $\hat U$ is
one-dimensionally stable, so that $\delta \ne 0$ \cite{ZH}.
}
\end{rem}

\begin{rems}\label{mSZrmk}
\textup{
1. Multi-dimensional stability of shock waves in the limit
$U_-\to U_+$ has been established in \cite{FS} for symmetric
dissipative systems with strictly
parabolic (Laplacian) viscosity $\sum (B_{jk}U_{x_j})_{x_k}$
By the arguments of Remarks \ref{SZrmk}.1 and \ref{suffeg}.1,
therefore, boundary layers of such systems with Dirichlet
boundary conditions are multi-dimensionally 
stable in the standing Lax $1$-shock
limit for $U_+$ fixed and $|U_+-U_-|$ sufficiently small.
The argument of \cite{FS} appears likely to generalize
to the general symmetric dissipative case (see also
\cite{PZ}), which would extend the boundary layer result
also to the general case.
}

\textup{
2. For ideal gas dynamics, the numerical study of 
\cite{HLyZ2} indicates multi-dimensional stability 
of arbitrary shock waves for gas constants in the
physical range $1.2\le \gamma\le 3$.
By the arguments of Remarks \ref{SZrmk}.2 and \ref{suffeg}.2,
therefore, {\it for ideal gas dynamics with Dirichlet boundary conditions
and $1.2\le \gamma \le 3$,
multi-dimensional stability in the standing Lax $1$-shock limit
is completely decided by vanishing or nonvanishing
of the extended Lopatinski determinant $\hat \Delta$
defined in \eqref{hatDelta}: a simple, linear-algebraic condition.}
}
\end{rems}

\subsubsection{Verification}\label{ver}

Evidently, nonvanishing of $\hat \Delta$ on the nonnegative
half-sphere in $(\tilde \xi, \lambda,\eta)$ 
is equivalent to the condition that the image of
\begin{equation}\label{hateta}
\hat \eta(\tilde \xi, \lambda):=
- \Delta(\tilde \xi, \lambda)/
\det \Big( \cR_+(\tilde \xi, \lambda) , dF_1(U_-) S_- \Big)
\end{equation}
over the nonnegative half-sphere in $(\tilde \xi, \lambda)$ 
avoid the nonnegative real axis, a natural generalization of
the one-dimensional condition \eqref{1Dstabcond}.
Note that $\hat \eta$ is independent of the choice of $\cR_+$
and homogeneous degree one in $(\tilde \xi, \lambda)$.
This leads us to the following condition convenient for
numerical or analytic verification.

\begin{prop}\label{verprop}
Assuming the one-dimensional stability condition \eqref{1Dstabcond},
nonvanishing of $\hat \Delta$ on the nonnegative
half-sphere in $(\tilde \xi, \lambda,\eta)$ 
is equivalent to the condition that the image of $\hat \eta(1,i\tau)$
over $\tau\in \RR$, with $\hat \eta$ defined as in \eqref{hateta}, 
does not intersect the nonnegative real axis.
\end{prop}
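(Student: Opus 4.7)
The plan is to convert nonvanishing of $\hat\Delta$ into an image-avoidance statement for $\hat\eta$, reduce the closed right $\lambda$-half-plane to its imaginary boundary by homogeneity and the argument principle, and use \eqref{1Dstabcond} to dispose of the contribution at infinity.

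First, $\hat\Delta$ is affine in $\eta$: expanding the last column of \eqref{hatDelta} gives
\begin{equation*}
\hat\Delta(\tilde\xi,\lambda,\eta)
=\Delta(\tilde\xi,\lambda)+\eta\,\cN(\tilde\xi,\lambda),
\qquad
\cN(\tilde\xi,\lambda):=\det\bigl(\cR_+(\tilde\xi,\lambda),\,dF_1(U_-) S_-\bigr),
\end{equation*}
so on $\{\cN\ne 0\}$ the root with $\eta\ge 0$ exists iff $\hat\eta=-\Delta/\cN\in[0,\infty)$. Points where $\cN=0$ would force $\Delta=0$, excluded by the Lopatinski condition implicit in uniform Evans stability of $\hat U$. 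By degree-one homogeneity of $\hat\eta$ in $(\tilde\xi,\lambda)$, avoidance of $[0,\infty)$ on the nonnegative half-sphere is a ray-property; using rotational symmetry in $\tilde\xi$ it therefore suffices to show that $\lambda\mapsto\hat\eta(1,\lambda)$ takes no value in $[0,\infty)$ on the closed right $\lambda$-half-plane.

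Only the direction "boundary-free $\Rightarrow$ interior-free" is nontrivial. On $\Re\lambda>0$, $\hat\eta(1,\cdot)$ is holomorphic (analyticity of $\Delta$ and of the unstable subspace $\cR_+$), continuous up to $\Re\lambda=0$, and by homogeneity and continuity along rays satisfies
\begin{equation*}
\hat\eta(1,\lambda)\sim \lambda\,\hat\eta(0,1)=-\lambda\,\delta/\hat\delta
\qquad(|\lambda|\to\infty,\ \Re\lambda\ge 0),
\end{equation*}
uniformly in $\arg\lambda\in[-\pi/2,\pi/2]$, where the formula $\hat\eta(0,\lambda)=-\lambda\delta/\hat\delta$ comes from the easy computation $\cR_+(0,\lambda)=R_+$, $\Delta(0,\lambda)=\lambda\delta$, $\cN(0,\lambda)=\hat\delta$. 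Hypothesis \eqref{1Dstabcond} gives $\delta/\hat\delta\ge 0$, so this asymptote traverses the closed left half-plane and remains uniformly far from $[0,\infty)$ on the outer semicircle $\{|\lambda|=R,\ \Re\lambda\ge 0\}$ for $R$ large. Consequently the image under $\hat\eta(1,\cdot)$ of the contour $C_R=[-iR,iR]\cup\{|\lambda|=R,\,\Re\lambda\ge 0\}$ lies in $\CC\setminus[0,\infty)$ as soon as its imaginary segment does. Since $\CC\setminus[0,\infty)$ is simply connected, the closed curve $\hat\eta(C_R)$ is nullhomotopic there, so has winding number zero about every $w_0\in[0,\infty)$; the argument principle applied to the holomorphic function $\hat\eta(1,\lambda)-w_0$ produces no zeros inside $C_R$. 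Letting $R\to\infty$ yields the claim.

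The main technical obstacle is to secure the asymptotic $\hat\eta(1,\lambda)\sim -\lambda\,\delta/\hat\delta$ uniformly in $\arg\lambda\in[-\pi/2,\pi/2]$, rather than merely pointwise along rays, so that the outer semicircle in $C_R$ contributes a definite distance from $[0,\infty)$. This rests on continuity of $\cR_+(\tilde\xi,\lambda)$ (hence of $\Delta$ and $\cN$) on the compactified half-sphere in $(\tilde\xi,\lambda)$ away from the origin, granted by the paper's hypotheses together with uniform Lopatinski for $\hat U$, and on the strict form of \eqref{1Dstabcond}, which supplies the definite negative sign of $\hat\eta(0,1)=-\delta/\hat\delta$ needed to push the asymptotic curve into the left half-plane.
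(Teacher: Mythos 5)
Your proposal follows essentially the same route as the paper's proof: split the half-sphere condition by homogeneity into the $\tilde\xi=0$ piece (which is \eqref{1Dstabcond}) and the $\tilde\xi=1$ piece, exploit analyticity of $\cR_+(1,\cdot)$ and hence $\hat\eta(1,\cdot)$ on $\Re\lambda>0$ with continuity up to the boundary, and close the argument by applying the argument principle on a large semicircle, using the one-dimensional condition to control the outer arc via the asymptote $\hat\eta(1,\lambda)\sim-\lambda\,\delta/\hat\delta$. You spell out details (the affine decomposition $\hat\Delta=\Delta+\eta\cN$, the explicit large-$|\lambda|$ asymptotics, the nullhomotopy/winding-number phrasing) that the paper's very compressed proof leaves implicit, but there is no new idea. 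One small quibble: your claim that $\cN=0$ "would force $\Delta=0$" is not justified and not actually needed; if $\cN(\tilde\xi_0,\lambda_0)=0$ then $\hat\Delta(\tilde\xi_0,\lambda_0,\eta)\equiv\Delta(\tilde\xi_0,\lambda_0)$ in $\eta$ while $\hat\eta$ is to be read as $\infty\notin[0,\infty)$, so the equivalence holds there automatically (as long as $\Delta$ itself does not vanish) without invoking any implication between $\cN$ and $\Delta$.
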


\begin{proof}
By homogeneity of $\hat \Delta$, nonvanishing on the half-sphere is
equivalent to nonvanishing of $\hat \delta(0,\lambda, \eta)$, or
\eqref{1Dstabcond},
and nonvanishing of $\hat \delta(1, \lambda, \eta)$, or
the condition that $\hat \eta(1,\lambda)$ avoid the nonnegative real
axis for $\Re \lambda\ge 0$.
Recalling the standard fact that $\cR_+(1,\lambda)$, 
hence $\hat \eta(1,\lambda)$, may be chosen to be analytic in $\lambda$
for $\Re \lambda>0$ and continuous at $\Re \lambda=0$, we find by 
the argument principle applied to a sufficiently large semicircle about
the origin, bounded to the left by the imaginary axis,
and the fact that $\hat \eta(1, \lambda)$ by homogeneity/continuity
does not intersect the nonnegative real axis for $\Re \lambda \ge 0$
and $|\lambda|$ sufficiently large (by the assumed one-dimensional stability)
that the latter condition is
equivalent to nonintersection with the nonnegative real axis for 
as $\lambda$ traverses the imaginary axis.
\end{proof}

\subsection{Discusssion and open problems}\label{discussion}

The results of Theorems \ref{mnec} and \ref{msuff} illuminate and
greatly extend the earlier results of \cite{SZ} and \cite{CHNZ}
in the one-dimensional case.
In particular, we regard the derivation of
necessary and sufficient conditions for multi-dimensional stability
as a substantial advance.
Though our necessary and our sufficient conditions are slightly different,
the difference is sufficiently slight that it should not interfere in practice
with classification of physical stability regions.

We note that, besides its independent interest, the treatment of the
standing shock limit, as pointed out in \cite{CHNZ}, is important
in truncating the computational domain for global stability analyses.

On the other hand, we have restricted here mainly to the simplest
case of a Lax $1$-shock, which corresponds to the case of {\it inflow}
boundary conditions.  It would be very interesting to obtain corresponding
conditions also in the case of outflow boundary conditions, for example,
a Lax $n$-shock.
Likewise, it would be very interesting to carry out computations analogous
to those carried out for gas dynamics in Section \ref{sec:apps} also for the
equations of MHD.

\section{Construction of the Evans function}\label{const}

We begin by reviewing the construction of the Evans function
following \cite{Z3,GMWZ5,GMWZ6,NZ1,NZ2}.

\subsection{Expression as a first-order system}
We first observe that matrix
$$
\begin{pmatrix} A_1^{11}& A_1^{12}\\ 
B_{11}^{21}& B_{11}^{22}\end{pmatrix}
=
\begin{pmatrix} dF_1^{11}& dF_1^{12}\\ 
B_{11}^{21}& B_{11}^{22}\end{pmatrix}
$$ 
is full rank, by (H2)--(H3) together with 
block structure assumption \eqref{symmsys},
as can be seen most easily by working in $\tilde W$-coordinates;
see \cite{Z1,Z3,MaZ3}. 

As a consequence, the Fourier-transformed eigenvalue equations
\eqref{eigensys}, \eqref{eBC} may be written as a first-order system
\begin{equation}\label{firstorder}
\begin{aligned}
Z'&=\GG_X(\tilde \xi, \lambda, x_1)Z,
\\
&
M(\tilde \xi) Z= 0
\end{aligned}
\end{equation}
in the convenient coordinates
\begin{equation}\label{Z}
Z=
\begin{pmatrix} Z_1\\ Z_2\\ Z_3 \end{pmatrix}
:=
\begin{pmatrix}
B_{11}^{21}u+B_{11}^{22}v\\
B_{11}U'- A_1 U
\end{pmatrix}
=
\begin{pmatrix}
B_{11}^{21}u+B_{11}^{22}v\\
-A_{1}^{11}u- A_1^{12} v\\
B_{11}^{21}u'+B_{11}^{22}v'
-A_{1}^{21}u- A_1^{22} v\\
\\
\end{pmatrix};
\end{equation}
see \cite{Z3} for further discussion.

In the case of Dirichlet boundary conditions $\dim \Upsilon_3=0$,
the boundary operator $M$ is independent of $\tilde \xi$, with
\begin{equation}\label{dirkerin}
\ker M=\{Z:\, Z_1=0, \, Z_2=0\} 
\end{equation}
in the inflow case and
\begin{equation}\label{dirkerout}
\ker M=\{Z:\,  Z_1=0\} 
\end{equation}
in the outflow case.
In general, $\ker B(\tilde \xi)$ depends on $\tilde \xi$ in
a possibly complicated way.
By the underlying relation $\bar U_X(x_1)=\hat U(x_1-X)$, we have
\begin{equation}
\GG_X(\tilde \xi, \lambda, x_1)= \GG(\tilde \xi, \lambda, x_1-X).
\end{equation}

\subsection{Conjugation to constant-coefficients}
We next recall the following result established in \cite{Z3,GMWZ5},
a consequence of the conjugation lemma introduced in \cite{MZ1}
and the fact proved in \cite{MaZ3,Z3} 
that $\hat U$ under hypotheses (H0)--(H7) converges
exponentially to $U_\pm$ as $x_1\to \pm \infty$.
See, e.g., \cite{Z3} for details.

\begin{prop}\label{conjugation}
There exist matrix-valued functions 
$$
T_X^\pm(\tilde \xi, \lambda,x_1)= T^\pm(\tilde \xi, \lambda, x_1-X), 
$$
uniformly bounded with bounded inverse for $x_1\gtrless 0$,
locally analytic in $(\tilde \xi, \lambda)$, such that
\begin{equation}\label{exp}
|(T^\pm-Id)(x_1)|\le Ce^{-\theta |x_1|} \; \hbox{\rm for}\; x_1 \gtrless 0,
\end{equation}
$\theta>0$,
and $Z:=T_X^\pm X$ satisfies the constant-coefficient equation
$$
X'=\GG_X^\pm(\tilde \xi,\lambda)X \; \hbox{\rm for} \; x_1\gtrless X
$$
$\GG_X^\pm(\tilde \xi,\lambda):= \GG_X^\pm(\tilde \xi,\lambda,\pm \infty)$,
whenever $Z$ satisfies \eqref{firstorder}.
\end{prop}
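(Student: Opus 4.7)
The plan is to realize this as a direct application of the conjugation lemma of M\'etivier--Zumbrun \cite{MZ1}, following the usage in \cite{Z3,GMWZ5}. That lemma asserts that if a first-order linear ODE system $Z' = \mathcal{M}(x_1,p) Z$ has coefficient matrix converging exponentially to limits $\mathcal{M}^\pm(p)$ as $x_1 \to \pm\infty$, locally analytically in a parameter $p$, then there exists a parameter-analytic change of variables $Z = T^\pm X$ with $T^\pm - \Id$ exponentially decaying and $T^\pm$ boundedly invertible, that conjugates the system into the constant-coefficient one $X' = \mathcal{M}^\pm X$ on the appropriate half-line. Applying this in our setting with $\mathcal{M} = \GG$ and $p = (\tilde\xi, \lambda)$ yields the conclusion for the centered profile, and the full $X$-dependent statement then follows by translation.

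The only hypothesis to verify is exponential convergence of $\GG(\tilde\xi,\lambda,\cdot)$ to $\GG^\pm(\tilde\xi,\lambda)$. From the definition of $Z$ in \eqref{Z} and the eigenvalue system \eqref{eigensys}, the entries of $\GG(\tilde\xi,\lambda,x_1)$ are rational functions of the entries of $A_j(\hat U(x_1))$, $B_{jk}(\hat U(x_1))$, and $dF_j(\hat U(x_1))$, with denominators nonvanishing under (H2)--(H3) together with the block structure of \eqref{symmsys}. These coefficients depend smoothly on $\hat U$ by (H0). Under (H5) and (H7), $U_\pm$ are hyperbolic rest points of the standing-wave ODE, so by the stable/unstable manifold theorem at those equilibria one has $|\hat U(x_1) - U_\pm| \le C e^{-\theta|x_1|}$ as $x_1 \to \pm\infty$ (see the detailed discussion in \cite{MaZ3,Z3}), which transfers to $|\GG(\tilde\xi,\lambda,x_1) - \GG^\pm(\tilde\xi,\lambda)| \le C e^{-\theta|x_1|}$ locally uniformly in $(\tilde\xi,\lambda)$. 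Polynomial dependence of $\GG$ on $(\tilde\xi,\lambda)$ is manifest from \eqref{eigensys}, giving the required analyticity in parameters.

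With the hypothesis verified, the conjugation lemma supplies $T^\pm(\tilde\xi,\lambda,x_1)$ enjoying all the claimed properties for the centered profile $\hat U$. For the translated profile $\bar U^X(x_1) = \hat U(x_1 - X)$, the translation identity $\GG_X(\tilde\xi,\lambda,x_1) = \GG(\tilde\xi,\lambda,x_1-X)$ shows that $T^\pm_X(\tilde\xi,\lambda,x_1) := T^\pm(\tilde\xi,\lambda,x_1-X)$ conjugates the translated coefficient to the same constant limit $\GG^\pm_X = \GG^\pm$ on $x_1 \gtrless X$, inheriting uniform boundedness, bounded invertibility, local analyticity, and exponential convergence to $\Id$ from $T^\pm$.

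The principal technical content sits inside the conjugation lemma itself: because $\GG^\pm(\tilde\xi,\lambda)$ generically has both stable and unstable spectra, the conjugator equation $(T^\pm)' = \GG T^\pm - T^\pm \GG^\pm$ must be solved by splitting $T^\pm - \Id$ along the spectral projectors of $\GG^\pm$, integrating each piece from the appropriate infinite endpoint, and running a contraction mapping in an exponentially weighted sup-norm, with the decay rate $\theta$ taken strictly less than both the spectral gap of $\GG^\pm$ and the decay rate of $\GG - \GG^\pm$. Local analyticity in $(\tilde\xi,\lambda)$ of the fixed point is then inherited from analyticity of the integral operator via uniform convergence of the Picard iteration. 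All of this is fully handled in \cite{MZ1}; the present proof accordingly reduces to verifying its hypotheses as above.
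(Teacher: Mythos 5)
Your proposal is correct and follows essentially the same route as the paper, which does not spell out a proof but simply cites the conjugation lemma of \cite{MZ1} together with the exponential convergence of $\hat U$ to $U_\pm$ established in \cite{MaZ3,Z3}, exactly as you do. You add some useful detail — verifying that the rational dependence of $\GG$ on $\hat U$ transfers the profile's exponential decay to the coefficients, and sketching the contraction argument inside the conjugation lemma — but the logical skeleton (invoke \cite{MZ1}, check exponential decay of $\hat U$, then translate by $X$) is precisely what the paper intends.
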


\subsection{Definition of the Evans function}\label{evansdef}

Finally, we recall the following standard result, established in
increasing generality in \cite{SZ,Z1,Z3,GMWZ4,GMWZ6}.

\begin{prop} \label{cbases}
Under (H0)--(H7), there exist matrices $\EE^-$, $\EE^+$, and
$\EE^0$ whose columns form bases of the unstable subspace of
$\GG^-$, the stable subspace of $\GG^+$, and $\ker \cM(\tilde \xi)$
and matrices $\FF^-$ and $\FF^+$ whose columns form bases of the
the stable subspace of $\GG^-$ and 
the unstable subspace of $\GG^+$, $C^\infty$ on $\{\Re \lambda \ge 0\}
\setminus \{(0,0)\}$ and continuously extendable along rays through the
origin.
Moreover, $\EE^+$ and $\FF^+$ and $\EE^-$ and $\FF^-$ 
 are uniformly transverse on
compact subsets of $\{\Re \lambda \ge 0\}$ and
\begin{equation}\label{dims}
\dim \Span \EE^-= \dim \Span \EE^0= (n+r)-\dim \Span \EE^-.
\end{equation}
\end{prop}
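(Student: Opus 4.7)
The plan is to build the five basis matrices $\EE^\pm$, $\FF^\pm$, $\EE^0$ separately, verify their smoothness on $\{\Re\lambda\ge 0\}\setminus\{(0,0)\}$, and then check the transversality and dimension claims. The boundary basis $\EE^0$ is purely algebraic: the matrix $\cM(\tilde\xi)$ depends polynomially on $\tilde\xi$ through the operator in \eqref{BC} and has constant rank by the dimension-counting constraint $\dim\Upsilon_2+\dim\Upsilon_3=r$, so $\ker\cM(\tilde\xi)$ is a smooth vector bundle over $\tilde\xi\in\RR^{d-1}$ and admits a smooth global frame by standard Grassmannian arguments.

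For the interior bases I would work with the limiting generator $\GG^\pm(\tilde\xi,\lambda)$ of \eqref{firstorder}. On the open half-space $\{\Re\lambda>0\}$, hypotheses (H1)--(H3) and (H5)--(H6) combine to show (as in Section 3 of \cite{GMWZ5}) that $\GG^\pm$ has no purely imaginary eigenvalue and that its stable and unstable subspaces have constant, complementary dimensions; Kato's analytic construction of spectral projections then produces analytic $\EE^+$, $\FF^+$ (resp. $\EE^-$, $\FF^-$) whose spans are uniformly transverse on compact subsets. Extension smoothly up to $\Re\lambda=0$ away from the origin is the delicate step. Here I would apply the block-diagonalization procedure of \cite{GMWZ4,GMWZ6}: under (H4), or the weaker (H4$'$) together with symmetric dissipativity, $\GG^\pm$ decouples, modulo a smooth conjugation, into a purely hyperbolic block whose eigenvalues are of constant multiplicity or totally nonglancing, and a parabolic block which, thanks to (H3) and (H6), has a uniform spectral gap from the imaginary axis. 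Smooth stable/unstable frames can then be assembled block by block, with the transversality of $\EE^\pm$ and $\FF^\pm$ preserved by the strict spectral gap in each block.

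The remaining point is continuous extendability along rays through the origin. I would rescale $(\tilde\xi,\lambda)=\rho(\check\xi,\check\lambda)$ with $|(\check\xi,\check\lambda)|=1$ and apply the low-frequency conjugation of \cite{Z3} and \cite{GMWZ4} Section 4 to extract a well-defined limiting generator whose hyperbolic and parabolic blocks have spectral projections continuous in the direction $(\check\xi,\check\lambda)$. The dimension count $\dim\Span\EE^+=\dim\Span\EE^0=(n+r)-\dim\Span\EE^-$ (which is what \eqref{dims} should read) then follows by adding $n-r$ hyperbolic modes to $r$ parabolic modes at $\pm\infty$, and matching against the rank of $\cM$ dictated by \eqref{BC}.

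The main obstacle is precisely this low-frequency directional limit: at the origin both the hyperbolic-parabolic block decoupling and the parabolic spectral gap degenerate simultaneously, so the naive spectral projection is singular; overcoming this requires the careful rescaling of \cite{GMWZ4}, which exchanges the singularity for a controlled directional dependence. Everything else, including the transversality on compacts, is then a routine consequence of the block structure and the spectral-gap estimates in (H6).
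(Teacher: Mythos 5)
The paper does not give a proof of Proposition~\ref{cbases}: it \emph{recalls} it as a standard result, citing \cite{SZ,Z1,Z3,GMWZ4,GMWZ6}. Your proposal is therefore not being compared against a proof in this paper but against the construction in those references, and at that level your sketch is essentially the right route: constant rank of $\cM(\tilde\xi)$ giving a global frame for $\EE^0$; consistent splitting (no purely imaginary eigenvalues of $\GG^\pm$) for $\Re\lambda>0$ via (H6)/(H5) and Kato's analytic projections; block-diagonalization as in \cite{GMWZ4,GMWZ6}, using (H4) or (H4$'$), to extend $C^\infty$ to $\{\Re\lambda\ge 0\}\setminus\{0\}$; and a polar rescaling near the origin to get directional limits along rays.

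The one concrete error is in your proposed correction to \eqref{dims}. As written the paper's display has a typo, but your replacement $\dim\Span\EE^+=\dim\Span\EE^0=(n+r)-\dim\Span\EE^-$ is also wrong. The correct identity is
\begin{equation*}
\dim\Span\EE^-=\dim\Span\EE^0=(n+r)-\dim\Span\EE^+ ,
\end{equation*}
i.e.\ the last $\EE^-$ should be $\EE^+$, not the first. This is forced by the two Evans determinants: $\cD=\det(T^-\EE^-,\,T^+\EE^+)$ requires $\dim\EE^-+\dim\EE^+=n+r$, and $D_X=\det(\EE^0,\,T_X^+\EE^+)$ requires $\dim\EE^0+\dim\EE^+=n+r$, whence $\dim\EE^-=\dim\EE^0$. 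Concretely, in the inflow case $\dim\EE^+=n$ ($n-r$ hyperbolic plus $r$ parabolic stable modes at $+\infty$), $\dim\EE^-=r$ ($r$ fast unstable modes at $-\infty$), and $\dim\EE^0=(n+r)-n=r$ consistent with $\dim\Upsilon_1+\dim\Upsilon_2+\dim\Upsilon_3=n$; your version would assert $n=r$. The outflow case similarly gives $\dim\EE^-=\dim\EE^0=n$ and $\dim\EE^+=r$. The rest of your argument, including the transversality of $\EE^\pm$ and $\FF^\pm$ on compacts (which is automatic once one has continuous bases of complementary invariant subspaces), is fine.
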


\begin{rem}\label{analytic}
\textup{
In the one-dimensional setting, the subspaces of Proposition \ref{cbases}
may be defined as {\it globally analytic} functions on $\Re \lambda \ge -\eta$, 
$\eta>0$, using a standard construction of Kato \cite{Kat}; see, e.g., 
\cite{GZ,Z3,CHNZ}.
}
\end{rem}

With these preparations, we define the shock Evans function precisely as
the $(n+r)\times (n+r)$ Wronskian
\begin{equation}\label{psev}
\cD(\tilde \xi, \lambda):=
\det \Big( T^- \EE^-, T^+\EE^+\Big)|_{x_1=0}
\end{equation}
and the boundary-layer Evans function as
\begin{equation}\label{pblev}
D_X(\tilde \xi, \lambda):=
\det \Big( \EE^0, T_X^+\EE^+\Big)|_{x_1=0}=
\det \Big( \EE^0, T^+\EE^+\Big)|_{x_1=-X}.
\end{equation}
The Evans function for the constant-layer $U\equiv U_-$ is given by
$$
D_-(\tilde \xi, \lambda):=
\det \Big( \EE^0, \FF^-\Big),
$$
since in this case $\EE^+=\FF^-$, or equivalently by
\begin{equation}\label{pcblev}
D_-(\tilde \xi, \lambda):=
\det \Big( \tilde \EE^{-*}\EE^0\Big),
\end{equation}
where 
$$(\tilde \EE^-, \tilde \FF^-)= (\EE^-, \FF^-)^{-1*}$$
are dual bases to $(\EE^-, \FF^-)$
with respect to the standard complex inner product,
$M^*$ denoting adjoint, or conjugate transpose,
of a matrix $M$.

\subsection{Behavior near zero}\label{sub:behavior}

For later use, we record the following refinement of Proposition \ref{cbases}
(also established in \cite{SZ,Z1,Z3,GMWZ4,GMWZ6}),
from which we may determine the
behavior of $\cD$, $D_x$ as $(\tilde \xi, \lambda)\to (0,0)$.

\begin{prop}\label{Elim}
Under (H0)--(H7), the limiting subspaces
$$
\lim_{\rho \to 0^+}\Span T^+ e^{\GG^+x_1}\EE^+(\rho \tilde \xi_0, \rho\lambda_0)
\;
\hbox{\rm  and } \; 
\lim_{\rho \to 0^+}\Span T^- e^{\GG^-x_1}\EE^-(\rho \tilde \xi_0, \rho\lambda_0) 
$$
$\rho\in \RR$, $\Re \lambda_0 \ge 0$, 
are spanned by the direct sum of fast modes 
$$ Z_1=b_{11}\phi,
\quad 
(Z_2,Z_3)=0,
$$
with $\phi$ satisfying $B_{11}\phi'-A_1\phi=0$ 
and decaying at $+\infty$ [resp. $-\infty$] and slow modes 
$$
Z_1= * ,\quad (Z_2,Z_3)=\cR,
$$
with $\cR$ spanning the unstable [resp. stable] subspace of
$\cA_+$ [resp. $\cA_-$, with
$$
\cA_\pm(\tilde \xi, \lambda)
:= \Big(\lambda I + \sum_{j=2}^d i\xi_j dF_j(U_\pm)\Big)dF_1(U_\pm)^{-1},
$$
expressible alternatively as $Z=T^\pm X$ with
\begin{equation}\label{Xconst}
X_1=b_{11}(-A_1^\pm)^{-1}\cR,\quad (X_2,X_3)=\cR
\end{equation}
constant.
Symmetric decompositions hold for 
$T^+e^{\GG^+x_1}\FF^+$ and $T^-e^{\GG^-x_1}\FF^-$.
\end{prop}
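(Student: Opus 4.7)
The plan is to analyze the asymptotic coefficient matrix $\GG^\pm(\tilde \xi,\lambda)$ by a slow--fast (Kato-type) spectral decomposition near $(\tilde \xi,\lambda)=(0,0)$, then transport the resulting subspaces through the conjugator $T^\pm$ and the constant-coefficient flow $e^{\GG^\pm x_1}$. The starting point is to rewrite the eigenvalue system \eqref{eigensys} at the asymptotic states $U_\pm$ in the $Z$-coordinates of \eqref{Z}: since $(Z_2,Z_3)=B_{11}U'-A_1U$, the full system at $(\tilde \xi,\lambda)=(0,0)$ reduces to $(Z_2,Z_3)'=0$ plus an equation governing $Z_1$. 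Consequently $\GG^\pm(0,0)$ preserves two natural invariant subspaces: a ``slow'' subspace where $(Z_2,Z_3)$ is constant, and a ``fast'' subspace where $(Z_2,Z_3)=0$ and $Z_1=b_{11}\phi$ with $\phi$ solving $B_{11}\phi'-A_1\phi=0$. The hyperbolicity/parabolicity hypotheses (H2)--(H3) and (H5)--(H6) ensure that, for $\rho>0$ small, the eigenvalues of $\GG^\pm(\rho\tilde \xi_0,\rho\lambda_0)$ split cleanly into a fast branch of order one and a slow branch of order $\rho$, with uniform spectral gap.

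The second step is to compute both sets of limiting eigenspaces. For the fast modes, standard holomorphic projector theory (applicable because the fast eigenvalues are isolated and bounded away from zero uniformly in $\rho$) gives continuity at $\rho=0$, so the limit spans the stable/unstable solutions of $B_{11}\phi'-A_1\phi=0$ that decay at $+\infty$ (respectively $-\infty$); in $Z$-coordinates these are exactly $Z_1=b_{11}\phi$, $(Z_2,Z_3)=0$. For the slow modes, I substitute the ansatz $(Z_2,Z_3)=\cR+O(\rho)$ with $\cR$ constant into $\lambda U=L_{\tilde \xi}U$ rescaled by $\rho$: to leading order $-A_1^\pm U=\cR$, hence $U=-(A_1^\pm)^{-1}\cR$, and plugging into the transverse-frequency terms of \eqref{eigensys} and collecting the $O(\rho)$ balance yields the reduced eigenvalue problem $\cA_\pm(\tilde \xi_0,\lambda_0)\cR=\mu\cR$, where $\cA_\pm$ is as in \eqref{cA}. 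The stable/unstable slow eigenvectors thus correspond to the stable/unstable subspaces of $\cA_\pm$, and the constant-$X$ expression \eqref{Xconst} follows from the relation $U=-(A_1^\pm)^{-1}\cR$ together with $Z_1=B_{11}^{21}u+B_{11}^{22}v=b_{11}U$.

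The third step is to transport these limiting subspaces to the variable-coefficient problem. By Proposition \ref{conjugation}, $T^\pm$ is a bounded, smoothly invertible conjugator with $T^\pm\to\mathrm{Id}$ exponentially as $x_1\to\pm\infty$, and analytic in $(\tilde \xi,\lambda)$; thus $T^\pm e^{\GG^\pm x_1}\EE^\pm$ is just the image of $\EE^\pm$ under a bounded map varying continuously with $(\tilde \xi,\lambda)$. Passing to the limit $\rho\to 0^+$ therefore commutes with the action of $T^\pm e^{\GG^\pm x_1}$, yielding the asserted direct-sum decomposition into fast plus slow modes at each $x_1$. The symmetric statement for $T^\pm e^{\GG^\pm x_1}\FF^\pm$ is obtained by replacing the decaying branch of $B_{11}\phi'-A_1\phi=0$ with its growing branch and the stable/unstable subspace of $\cA_\pm$ with its complement.

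The main obstacle in carrying out this outline is the rigorous slow--fast separation of the spectral subspaces along rays through the origin. The issue is that $\GG^\pm(\tilde \xi,\lambda)$ has a high-order branch point at $(0,0)$, so ordinary holomorphic perturbation theory does not apply to the whole spectrum at once; instead one must first extract the slow sector by a Schur-type reduction (or by rescaling the slow block by $\rho$ to restore regularity), then invoke Kato's construction separately on the slow block using the eigenstructure of $\cA_\pm$ granted by (H4)--(H5), and finally verify that the two branches are uniformly transverse. This is done in detail in \cite{Z3,GMWZ4,GMWZ6} and can be cited as is; the additional content here is simply identifying the limiting subspaces in the explicit form stated.
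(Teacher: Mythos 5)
Your outline is sound and follows the standard slow--fast spectral splitting used in the cited references; note that the paper itself does not supply a proof of Proposition \ref{Elim}, but simply cites \cite{SZ,Z1,Z3,GMWZ4,GMWZ6}, so there is no ``paper proof'' to compare directly against. Your identification of the invariant structure at $(\tilde\xi,\lambda)=(0,0)$ --- $(Z_2,Z_3)'=0$, fast solutions $Z_1=b_{11}\phi$, $(Z_2,Z_3)=0$ with $B_{11}\phi'-A_1\phi=0$, and slow solutions with $(Z_2,Z_3)=\cR$ constant and $U\to -(A_1^\pm)^{-1}\cR$ so that $X_1=b_{11}(-A_1^\pm)^{-1}\cR$ --- reproduces the content of \eqref{Xconst} correctly, and your appeal to Proposition \ref{conjugation} plus uniform transversality for the passage through $T^\pm e^{\GG^\pm x_1}$ is the right mechanism, as is deferring the technical Schur/Kato reduction to the cited works.

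One place to be careful: when you say the stable/unstable slow eigenvectors ``correspond to the stable/unstable subspaces of $\cA_\pm$,'' the correspondence actually carries a sign flip. Writing $W:=A_1 U$ for the slow variable, the leading-order slow dynamics is $W'=-\rho\,\cA_\pm(\tilde\xi_0,\lambda_0)W+O(\rho^2)$, so a slow eigenvalue $\nu\sim -\rho\mu$ of $\GG^\pm$ has $\Re\nu<0$ (stable for $\GG^+$, hence in $\EE^+$) precisely when $\Re\mu>0$, i.e.\ the \emph{unstable} subspace of $\cA_+$; symmetrically, $\Re\nu>0$ (unstable for $\GG^-$, hence in $\EE^-$) corresponds to the \emph{stable} subspace of $\cA_-$. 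That is exactly what the proposition asserts, so the conclusion is unaffected, but the phrase as written could be read as a naive stable-to-stable matching. Beyond this, the proposal is correct and follows essentially the same route as the references the paper relies on.
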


\section{Basic convergence result}\label{sec:conv}

\begin{proof}
[Proof of Theorem \ref{conv} for $(\tilde \xi, \lambda)$
 bounded away from the origin]
Viewing $\cD$ and $D_X$ as Wronskians of solutions of the same 
ODE \eqref{firstorder}, we may rewrite \eqref{pblev} using Abel's Theorem
as
\begin{equation}\label{Ablev}
D_X(\tilde \xi, \lambda):=
e^{\int_0^{-X} \Trace \GG(\tilde \xi, \lambda, z)dz}
\det \Big( \cS^{-X\to 0}\EE^0, T^+\EE^+\Big)|_{x_1=0},
\end{equation}
where $\cS^{y\to x}$ denotes the solution operator of \eqref{firstorder}.

Next, expand
\begin{equation}\label{keycalc}
\begin{aligned}
\EE^0&= T^-(-X) T^-(-X)^{-1}\EE^0\\
&=
T^-\EE^0|_{x_1=-X} + O(e^{-\theta X})\\
&=
T^- \Pi_{\EE^-}\EE^0|_{x_1=-X} 
+T^- \Pi_{\FF^-}\EE^0|_{x_1=-X} 
+ O(e^{-\theta X}),\\
\end{aligned}
\end{equation}
where
\begin{equation}\label{proj}
\Pi_{\EE^-}= \EE^- \tilde \EE^{-*}
\; \hbox{\rm and } \;
\Pi_{\FF^-}= \FF^- \tilde \FF^{-*}
\end{equation}
denote the eigenprojections of $\GG^-$ onto
subspaces $\EE^-$ and $\FF^-$, noting that
\begin{equation}\label{projform}
T^- \Pi_{\EE^-}\EE^0 = (T^- \EE^-)(\tilde \EE^{-*}\EE^0).
\end{equation}

From \eqref{projform}, we obtain
\begin{equation}\label{piece}
\begin{aligned}
\det &\Big( \cS^{-X\to 0}T^-\Pi_{\EE^-}\EE^0, T^+\EE^+\Big)|_{x_1=0}\\
&= \det \Big( \cS^{-X\to 0}T^-\EE^-, T^+\EE^+\Big)|_{x_1=0}
\det (\tilde \EE^{-*}\EE^0)\\
&=
\det \Big( \cS^{-X\to 0}T^-\EE^-, T^+\EE^+\Big)|_{x_1=0} D_-.
\end{aligned}
\end{equation}
Next, expanding
$$
\cS^{y\to x}T^-(y)=      T^-(x)e^{\GG^-(x-y)}
$$
and noting that
\begin{equation}\label{commutator}
e^{\GG^-(x-y)}\EE^-=
e^{\Pi_{\EE^-}\GG^-(x-y)}\Pi_{\EE^-}\EE^-=
\EE^-e^{\tilde \EE^{-*}\GG^-  \EE^-(x-y)},
\end{equation}
where $\Trace (\tilde \EE^{-*}\GG^-  \EE^-)= 
\Trace (\tilde \EE^{-*} \EE^- \GG^-)= \Trace \Pi_{\EE^-}\GG^-$,
we find that
\begin{equation}\label{2piece}
\begin{aligned}
\det &\Big( \cS^{-X\to 0}T^-\EE^-, T^+\EE^+\Big)|_{x_1=0}\\
&\quad =
e^{\Trace(\Pi_{\EE^-}\GG^-)X}
\det &\Big( T^-\EE^-, T^+\EE^+\Big)|_{x_1=0}\\
&\quad =
e^{\Trace(\Pi_{\EE^-}\GG^-)X}\cD,
\end{aligned}
\end{equation}
where, since $\EE^-$ is the unstable subspace of $\GG^-$,
$$
e^{\Trace(\Pi_{\EE^-}\GG^-)X}
$$
is uniformly exponentially growing in $X$ for $\Re \lambda \ge 0$.
Indeed, for $(\tilde \xi, \lambda)$
bounded from the origin $\Re \lambda \ge 0$, each column
of $\cS^{-X\to 0}T^-\EE^-$ is uniformly exponentially growing in $X$ at
rate at least
$$
e^{\mu_*X},
$$
where
$\mu_*(\tilde \xi, \lambda)$ is the smallest real part of the 
(positive real part) eigenvalues
of $\GG^-$ associated with the unstable subspace $\EE^-$.

By a similar argument, each column of $\cS^{-X\to 0}T^-\Pi_{\FF^-}\EE^0$
is uniformly exponentially decaying in $X$, at rate 
$e^{\mu^* X}$, where $\mu^*$ is the largest real part of
the (negative real part) eigenvalues of $\GG^-$ associated with the
stable subspace $\FF^-$.
Collecting information, we thus have
\begin{equation}\label{final}
\begin{aligned}
D_X(\tilde \xi, \lambda)&=
\beta(\tilde \xi, \lambda)
\Big(
D_X (\tilde \xi, \lambda) \cD(\tilde \xi, \lambda)
+O(e^{-\theta X})
+O(e^{(\mu_*-\mu^*) X})
\Big)\\
& \to
\beta(\tilde \xi, \lambda, X)D_X (\tilde \xi, \lambda) \cD(\tilde \xi, \lambda)
\end{aligned}
\end{equation}
as $X\to +\infty$, exponentially in $X$, where
\begin{equation}\label{beta}
\beta(\tilde \xi, \lambda, X):=
e^{\int_0^{-X} \Trace \GG(\tilde \xi, \lambda, z)dz}
e^{\Trace(\Pi_{\EE^-}\GG^-)X},
\end{equation}
for $(\tilde \xi, \lambda)$ uniformly bounded away from the origin
on $\Re \lambda \ge 0$.
\end{proof}

\begin{rem}\label{important}
\textup{
In general, the real parts of the eigenvalues of $\EE^-$ and $\FF^-$
can converge to zero as $(\tilde \xi, \lambda)$ approach the origin,
so that $\mu_*$, $\mu^*\to 0$ and the above convergence argument fails.
In the special case of a Lax $1$-shock, $\mu^*\to 0$, but
$\mu_*$ remains strictly negative \cite{Z3}, and so we obtain uniform
convergence by this argument
 on all of $\Re \lambda \ge 0$.  Indeed, in the one-dimensional
case, we obtain uniform convergence on $\Re \lambda \ge -\eta$, $\eta>0$.
(The only obstruction in the multi-dimensional case is that $\cD$ is 
not defined on this set \cite{Z3}.)
}
\end{rem}

Combining Remarks \ref{hfrems}, \ref{analytic}, and \ref{important},
we obtain the following simple result reducing determination
of one-dimensional stability
to computation of the stability index.

\begin{lem}\label{onezero}
Under (H0)--(H7) and 
stability of the constant layer $U\equiv U_-$,
for a stable Lax $1$-shock in one dimension, 
$D_X$ has exactly one zero on $\Re \lambda > -\eta$,
$\eta>0$, for $X$ sufficiently large, hence is stable
if and only if its stability index is positive.
\end{lem}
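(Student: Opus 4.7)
The plan is to combine the one-dimensional refinement of Theorem \ref{conv} stated in Remark \ref{important} with a Rouch\'e argument. By that remark, for a one-dimensional Lax $1$-shock the convergence
$D_X(\lambda)\to \beta(\lambda,X)\,D_-(\lambda)\,\cD(\lambda)$
holds uniformly on the strip $\{\Re\lambda\ge -\eta\}$ for some $\eta>0$, with $\beta$ continuous and nonvanishing; by the Kato construction of Remark \ref{analytic}, all three factors extend analytically to this strip.

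Next I would inventory the zeros of the limit. Stability of the constant layer $U\equiv U_-$ gives $D_-\ne 0$ on $\{\Re\lambda\ge 0\}$, and after possibly shrinking $\eta$ this persists on $\{\Re\lambda\ge -\eta\}$ by analyticity and compactness of suitable subsets. Stability of $\hat U$ gives $\cD\ne 0$ on $\{\Re\lambda>0\}$, while translation invariance forces $\cD(0)=0$. The zero at the origin is simple, since $\partial_\lambda \cD(0)=\delta\ne 0$ by one-dimensional inviscid stability of the Lax $1$-shock (formula \eqref{shocknorm}). Shrinking $\eta$ once more excludes any additional zero of $\cD$ in $\{-\eta\le \Re\lambda<0\}$. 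Thus on $\{\Re\lambda\ge -\eta\}$ the limiting product $\beta D_-\cD$ has exactly one zero, simple and located at $\lambda=0$.

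I would then apply Rouch\'e's theorem on the boundary of the rectangle $\Omega_{R,\eta}:=\{-\eta\le \Re\lambda,\ |\lambda|\le R\}$ for $R$ fixed large. Uniform convergence and nonvanishing of $\beta D_-\cD$ on $\partial\Omega_{R,\eta}$ imply that $D_X$ has exactly one zero in $\Omega_{R,\eta}$ for all $X$ sufficiently large. The main obstacle is to exclude zeros of $D_X$ in the unbounded tail $\{\Re\lambda\ge -\eta,\ |\lambda|>R\}$; for this I invoke Remark \ref{hfrems}.1, which reduces high-frequency spectral stability of $\bar U^X$ to stability of the constant layer $U\equiv U_0^X$, together with the fact that $U_0^X\to U_-$ (itself stable) as $X\to\infty$. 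Choosing $R$ large enough then yields uniform nonvanishing of $D_X$ outside $\Omega_{R,\eta}$ for all large $X$.

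Finally, by the standard one-dimensional normalization making $D_X$ real on $\RR$, its complex zeros occur in conjugate pairs, so the unique zero $\lambda_*$ in $\Omega_{R,\eta}$ must be real. The stability index
$\Gamma = \sgn D_X(0^+)\cdot\lim_{\lambda\to +\infty}\sgn D_X(\lambda)$
is then $+1$ precisely when $D_X$ exhibits no sign change on $[0,\infty)$, equivalently $\lambda_*\le 0$, which is exactly one-dimensional spectral stability (nonvanishing of $D_X$ on $\{\Re\lambda>0\}$). This gives the claimed equivalence of stability with positivity of the stability index.
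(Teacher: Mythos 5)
Your proposal follows the same approach as the paper's proof: restrict to a compact frequency set via Remark \ref{hfrems}, exploit the one-dimensional analyticity (Remark \ref{analytic}) and the uniform convergence $D_X\to\beta D_-\cD$ on $\{\Re\lambda\ge-\eta\}$ (Remark \ref{important}), count zeros of the limit (exactly one, at $\lambda=0$, coming from $\cD$), transfer the count to $D_X$ via an argument-principle/Rouch\'e step, and read off the conclusion from the stability index. Your write-up is more explicit in places where the paper is terse --- you spell out the Rouch\'e rectangle and the use of conjugate symmetry to place the unique zero on the real axis, whereas the paper simply invokes ``uniform limit of analytic functions'' and interprets $\Gamma$ directly as a parity count of nonstable roots.

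One small imprecision at the very end: you conclude that $\Gamma=+1$ is equivalent to $\lambda_*\le 0$, but the degenerate case $\lambda_*=0$ gives $D_X(0)=0$, hence $\sgn D_X(0^+)=0$ and $\Gamma=0$, not $\Gamma=+1$. The correct statement is $\Gamma=+1\iff\lambda_*<0$, $\Gamma=-1\iff\lambda_*>0$, and $\Gamma=0\iff\lambda_*=0$; the paper closes this loophole by citing \cite{NZ1} to classify $\Gamma=0$ as instability, so that ``stable $\iff\Gamma>0$'' still holds. Your phrasing would be fine after replacing $\lambda_*\le 0$ by $\lambda_*<0$ and noting that the boundary case $\lambda_*=0$ is treated as unstable.
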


\begin{proof}
By Remarks \ref{hfrems}, under stability of $U\equiv U_-$,
we may restrict attention to a compact set of frequencies,
while by Remark \ref{analytic} we may take $D_X$ and $\cD$
analytic on $\Re \lambda \ge -\eta$.
As the uniform limit of analytic functions, we find that the number
of zeros of $\cD D_-$ on $\Re \lambda > -\eta$ is equal to
the number of zeros of $D_X$ for $X$ sufficiently large.
As the number of zeros of $\cD$ is one and the number of zeros
of $D_-$ is zero, by stability, the number of zeros of $D_X$
is one as asserted.

As the stability index $\Gamma$ counts the parity of the number of nonstable
roots $\Re \lambda>0$ (see \cite{GZ,Z3}), positivity of $\Gamma$
corresponding to an even number of nonstable roots, and
since $\Gamma=0$ corresponds to instability \cite{NZ1},
we thus obtain stability if and only if $\Gamma>0$.
\end{proof}

\section{Behavior near the origin}\label{origin}

\begin{proof}[Proof of Theorem \ref{conv} for $(\tilde \xi, \lambda)\to (0,0)$]
The shock evans function $\cD$ is continuous at the origin, with $\cD(0,0)=0$,
and $D_-$ is bounded on compact sets.
Thus, to complete the proof of Theorem \ref{conv}, it suffices to show that
$$
D_X(\tilde \xi,\lambda)/\beta(\tilde \xi,\lambda)\to 0
$$
as $(\tilde \xi, \lambda) \to 0$ and $X\to +\infty$, 
with $\beta$ defined as in \eqref{beta}.

Noting that $T^-\EE^-(0,0)$ contains by continuity
all exponentially decaying solutions of the one-dimensional
eigenvalue equation, hence, in particular, $\hat U'(x_1)$,
we may without loss of generality assign the value $\hat U'(x_1)$
to the first column of $T^-\EE^-$ at $(0,0)$.
Moreover, noting that the strongly unstable subspace of $\GG^-$,
defined as the part whose eigenvalues have strictly positive real
part even at $(0,0)$, perturbs analytically, we 
may restrict the first column to this subspace, ensuring that
the first column of $\cS^{-X\to 0}T^-\EE^-$ is analytic up to
the origin and moreover grows exponentially
in $X$, at rate $e^{\theta X}$, some $\theta>0$, for $|(\tilde \xi, \lambda)|$
sufficiently small, 
hence contributions to $D_X$ coming from the first
columns of the second two terms in the last line of \eqref{keycalc}
are exponentially small as $X\to 0$ and can be ignored, while the
contributions in other columns are at least bounded.
Likewise, we may arrange that the first column of $T^+\EE^+$
be analytic up to the origin and equal to $\hat U'$ at $(\tilde \xi, 
\lambda)=(0,0)$.

In place of \eqref{final}, therefore, we obtain the weaker estimate
\begin{equation}\label{weakfinal}
\begin{aligned}
|D_X(\tilde \xi, \lambda)|/\beta(\tilde \xi, \lambda)&\le
\Big|D_X (\tilde \xi, \lambda) \det\Big(T^-(\EE^-_1,O(1)), 
T^+\EE^+\Big)_{x_1=0}\Big|\\
&\quad
+O(e^{-\theta X})
+O(e^{(\mu_*-\mu^*) X})
\\
& \to 0
\end{aligned}
\end{equation}
as $(\tilde \xi, \lambda)\to (0,0)$,  $X\to +\infty$,
where $T^-\EE^-_1$ 
and $T^+\EE^+_1$ 
denote the first columns of 
$T^-\EE^-$ and $T^+\EE^+$, 
since by our choice of normalization
$T^-\EE^-_1$ and $T^+\EE^+_1$ are continuous (indeed, analytic)
at the origin and coincide for $(\tilde \xi, \lambda)=(0,0)$.
This completes the proof of the theorem.
\end{proof}

\begin{proof}[Proof of Theorem \ref{lim}]
Fix $\tilde \xi\equiv 0$, so that $D(0,\lambda)$ is continuous in $\lambda$.
Without loss of generality, take 
$D_-(0,0)=1$, with, moreover,
\begin{equation}\label{balance}
\EE^0(0,0) e^{-\tilde \EE^{-*}\GG^-  \EE^-X}
=T(-X)e^{-\GG^-X}\Big(\EE^-(0,0)+ \FF^-(0,0) \alpha\Big)
\end{equation}
for some $k\times k$ matrix $\alpha$, where $k=\dim \FF^-$.
Thus,
\begin{equation}\label{prekeycalc}
\begin{aligned}
\cS^{-X\to 0}\EE^0&=
T^-(0) \Big( e^{\GG^-X} T^-(-X)^{-1}\EE^0 \Big)\\
&= T^-(0)\Big(\EE^-(0,0)+ \FF^-(0,0) \alpha\Big)
e^{-\tilde \EE^{-*}\GG^-  \EE^-X}.
\end{aligned}
\end{equation}

Following the proof of Theorem \ref{conv}, we find that the
quantity
$$
D_X(0,0)/ e^{\int_0^{-X} \Trace \GG(0,0, z)dz}e^{\Trace \Pi_{\EE^-}\GG^- X}
$$
is given (exactly, with no exponentially decaying error) by
$$
\det \Big( T^-(\EE^-
+  \FF^- \alpha , T^+\EE^+\Big)|_{x_1=0,
\, (\tilde \xi,\lambda)=(0,0)},
$$
where $\Trace \GG(0,0, z)$ and $\Trace \Pi_{EE^-}\GG^-(0,0) $ are  real, hence
$$
e^{\int_0^{-X} \Trace \GG(0,0, z)dz}
\;\hbox{\rm and }\;
e^{\Trace \Pi_{\EE^-} \GG^- X}
$$
are real and positive.

Appealing to Proposition \ref{Elim}, we may arrange without loss
of generality that the first $k_1$ columns of
$T^-(0,0,x_1)e^{\GG^-(0,0) x_1} \EE^-(0,0)$ 
consist of functions $(b_{11}\phi_j,0)$, where 
$\phi_j$, $j=1, \dots, k_1$  are solutions of
\begin{equation}\label{1eval}
B_{11}\phi_j'-A_1\phi_j=0
\end{equation}
that are uniformly exponentially decaying as $x_1\to -\infty$, hence
exponentially growing in forward direction,
and the remaining $k_2$ columns consist of functions
$(*, r_j^-)$, $j=1,\dots,k_2$,
 where $r_j^-$ are constant eigenvectors of
$A_1^-$ with negative eigenvalues $a_j$,
and similarly for $T^+\EE^+$.
Likewise, we may arrange that the columns of $T^-\FF^-$
consist of $l_1$ solutions of \eqref{1eval}
that are uniformly exponentially decaying in $x_1$ in forward direction
and $l_2$ functions
$(*, r_j^-)$, $j=1,\dots,l_2$,
 where $r_j^-$ are constant eigenvectors of
$A_1^-$ with positive eigenvalues $a_j$.

Finally, we may choose the first column of $T^-\EE^-|_{x_1=0}$ as $\hat U'(0)$,
noting that $\hat U'(0)$ lies also in $T^+\EE^+|_{x_1=0}$.
Combining these facts, we find that, up to an exponentially decaying
error with respect to $X$, we may rewrite
$D_X(0,0)/ e^{\int_0^{-X} \Trace \GG(0,0, z)dz}e^{\Trace(\Pi_{\EE^-}\GG^-)X}$
as
\begin{equation}\label{blockdet}
\det \begin{pmatrix} 
* & \psi_2 & \cdots & \psi_{k_2} & * & *&
\psi_{k_1+1} & \cdots & \psi_{n+1} \\
|\hat U'(-X)|\hat V_X & 0 & \cdots & 0 & \hat \cR^-& \cR^+ & 0 &\cdots & 0
\end{pmatrix}
\end{equation}
evaluated at $x_1=0$, $(\tilde \xi,\lambda)=(0,0)$, where
$\psi_j:=b_{11}\phi_j$,
$\begin{pmatrix}* \\|\hat U'(-X)|\hat V_X \end{pmatrix}$ 
is the part of the first column of  $\FF^-\alpha$ 
involving only slow modes 
$$
((-a_j^-)^{-1}b_{11}r_j^-,r_j^-)
$$
(recall \eqref{Xconst}), and
$\begin{pmatrix}*\\\hat \cR^- \end{pmatrix}$ 
is the part involving only slow modes of the block 
of $\EE^-+\FF^-\alpha$ corresponding to the slow block
$\begin{pmatrix}*\\ \cR^- \end{pmatrix}$ of $\EE^-$.
Referring to \eqref{balance}, we see that $\hat V_X$
has a limit $\hat V$ as $X\to +\infty$ so long as
$S:=\lim_{z\to -\infty}(\hat U'/|\hat U'|)(z)$ exists
(the first column of $T^-(-X)e^{-\GG^-X}\EE^-$ being
then approximately $|\hat U'(-X)|S$ as $X\to +\infty$,
so that the slow component of the first column of $\FF^-\alpha$
is approximately $|\hat U'(-X)|\hat V$ for a fixed $\hat V$
determined by $S$).

By a block determinant expansion, we have, therefore,
$$
\begin{aligned}
D_X&(0,0)/ e^{\int_0^{-X} \Trace \GG(0,0, z)dz}e^{\Trace(\Pi_{\EE^-}\GG^-)X}=\\
&\quad
\sigma |\hat U'(-X)| \det ( \psi_2, \dots, \psi_{n+1}) 
\det(\hat \cR^-, \cR^+, \hat V_X )|_{x_1=0, \, (\tilde \xi,\lambda)=(0,0)}\\
&\to 
\sigma |\hat U'(-X)| \det ( \psi_2, \dots, \psi_{n+1}) 
\det(\hat \cR^-, \cR^+, \hat V )|_{x_1=0, \, (\tilde \xi,\lambda)=(0,0)},\\
\end{aligned}
$$
where $\sigma=\pm 1$ depending on dimensions $n$, $r$ and
$\det ( \psi_2, \dots, \psi_{n+1}) \ne 0$ assuming stability
of the limiting shock $\hat U$ (else $\cD$ would vanish to second 
instead of first order at the origin \cite{Z3}).
Normalizing 
$$
 \sgn \sigma\det ( \psi_2, \dots, \psi_{n+1}) =+1,
$$
we obtain
$$
\lim_{X\to +\infty} \sgn \lim_{\lambda\to 0^+ \, 
\hbox{\rm real}} D_X(0,\lambda)= \hat \delta:=
\det (\hat R^-,R_+,\hat V)
$$
as claimed.

Reviewing the computation in \cite{Z3} of $\partial_\lambda\cD(0,0)$,
we find that this is the same normalization of $\psi_j$ columns leading
to the assumed normalization \eqref{shocknorm}, whence
\eqref{1stabcond} is necessary for stability by the discussion above
the statement of the theorem.

Finally, for a Lax $1$-shock, all eigenvalues of $A_1^-$ are positive,
hence the $\cR^-$ block is empty in the computation above and the boundary
conditions must be of inflow type.
If also, the boundary conditions are Dirichlet type, then by
\eqref{dirkerin}, the first column of 
$\EE^0(0,0) e^{-\tilde \EE^{-*}\GG^-  \EE^-X}$
(since all of $\EE^0$) 
is of form
$$
\Big(\EE^0(0,0) e^{-\tilde \EE^{-*}\GG^-  \EE^-X}\Big)_1=
\hat \EE^-_1 +\Big(\hat \FF^{-,slow}\alpha\Big)_1=
\begin{pmatrix} 0 \\ \begin{pmatrix}0 \\ *\\ \end{pmatrix} \end{pmatrix},
$$
where $\hat \EE^-_1$ and $\hat \FF^{-,slow}_1$, defined as the first column
of $\hat \EE^-:=T^-(-X)e^{-\GG^-X}\EE^-$ and  the slow part of 
$\hat \FF^-:=T^-(-X)e^{-\GG^-X}\FF^-$,
are asymptotically of form
(since $T^-e^{\GG^-x_1}\EE^-_1= (b_{11}\hat U'(x_1),0)$ and
$\lim_{z\to -\infty} T^-=Id$)
$$
|\hat U'(-X)|\begin{pmatrix} b_{11}^-S \\ 0\\ \end{pmatrix}
\; \hbox{\rm and}\;
|\hat U'(-X)|\begin{pmatrix}  b_{11}^-(-A_1^-)^{-1}\hat \cR_1^-
\\ \hat \cR_1^- \end{pmatrix}
$$
as $X\to +\infty$,
where $\hat \cR_1^-$ lies in the unstable subspace of $A_1^-$.
Equating, we find that $\hat \cR_1^-=\begin{pmatrix}
0 \\ c\end{pmatrix}$
and
$b_{11}^-S= b_{11}^-(A_1^-)^{-1}\hat \cR_1^-$.
Recalling that $\Big(A_1^{11}, A_1^{12}\Big) \hat U'=0$
by the linearized boundary-layer ODE, so that 
$$
\Big((A_1^-)^{11}, (A_1^-)^{12}\Big) S=0,
$$
we find by inspection that 
$\hat \cR_1= A_1^- S$,
or, in the notation of the general case,
\begin{equation}\label{V}
\hat V= A_1^-S,
\end{equation}
yielding the result.
\end{proof}

\begin{proof}[Proof of Theorem \ref{1lim}]
Immediate, by Lemma \ref{onezero} and Theorem \ref{lim}.
\end{proof}

\begin{proof}[Proof of Theorem \ref{mnec}]
Set $\eta:=|\hat U'(-X)|$, and consider 
$0\le \rho \le Ce^{-\theta X}$ for some
fixed $C>0$ and $\theta>0$ sufficiently small, in particular,
small enough that 
\begin{equation}\label{smallness}
|\hat U'(-X)|<<Ce^{-\theta X},
\end{equation}
setting
$$
(\tilde \xi, \lambda, \eta)=:
(\rho \tilde \xi_0, \rho \tilde \lambda_0, \rho \eta_0)
$$
with
$|(\tilde \xi_0, \tilde \lambda_0, \eta_0)|=1$.
Note, as $X\to +\infty$, that the set of possible values of
$(\tilde \xi_0, \tilde \lambda_0, \eta_0)$ expands to
the full positive half-sphere $\Re \lambda_0$, $\eta_0>0$.
Restrict now to a compact subset of the positive half-sphere,
recalling (see Proposition \ref{cbases}) that the Evans functions
$D_X$ and $\cD$, and their component columns, are $C^\infty$
in $\rho$, $\tilde \xi_0$, $\tilde \lambda_0$.

Within the specified parameter-regime, both slow and fast modes
of \eqref{firstorder}
at $(\tilde \xi, \lambda)=\rho(\tilde \xi_0, \lambda_0)$
are well-approximated on $x_1\in [-X, 0]$ by their limiting
values as $\rho\to 0$, described in Proposition \ref{Elim}.
Mimicking the one-dimensional computations \eqref{blockdet},
\eqref{V}, we may rewrite
$$
D_X(\rho \tilde \xi_0,\rho \lambda_0)/ 
e^{\int_0^{-X} \Trace \GG(\rho \tilde \xi_0,\rho \lambda_0, z)dz}
e^{\Trace(\Pi_{\EE^-}\GG^-)X}
$$
as the sum of
$$
\begin{aligned}
\rho \det & \begin{pmatrix} 
* & \psi_2 & \cdots & \psi_{k_2} & * & *&
\psi_{k_1+1} & \cdots & \psi_{n+1} \\
\eta_0 A_1 S & 0 & \cdots & 0 & \hat \cR^-& \cR^+ & 0 &\cdots & 0
\end{pmatrix}\\
&\quad 
+o(\rho \eta_0)
\end{aligned}
$$
and
$$
\begin{aligned}
&\rho \det  \begin{pmatrix} 
* & \psi_2 & \cdots & \psi_{k_2} & * & *&
\psi_{k_1+1} & \cdots & \psi_{n+1} \\
\partial_\rho T^-\EE^-_1 -\partial_\rho T^+\EE^+_1 
& 0 & \cdots & 0 & \hat \cR^-& \cR^+ & 0 &\cdots & 0
\end{pmatrix}\\
&\qquad\quad 
+o(\rho |(\tilde \xi_0,\lambda_0)|),\\
\end{aligned}
$$
both evaluated at $x_1=0$, $\rho=0$, and $(\tilde \xi_0,\lambda_0)$. 
We omit the details of this straightforward but tedious computation.

A standard computation \cite{ZS,Z3,GMWZ4} using the variational
equations of \eqref{firstorder} with respect to $\rho$ yields
$$
\partial_\rho T^-\EE^-_1 -\partial_\rho T^+\EE^+_1 =
 \lambda_0[U]+\sum_{j=2}^{d}i\tilde \xi_0^j[F_j(U)],
$$
whence, normalizing as usual so that
$$
 \sgn \sigma\det ( \psi_2, \dots, \psi_{n+1}) =+1,
$$
we obtain by block determinant expansion
\begin{equation}\label{expansion}
\begin{aligned}
\rho^{-1}D_X(\rho \tilde \xi_0,\rho \lambda_0)/ 
e^{\int_0^{-X} \Trace \GG(\rho \tilde \xi_0,\rho \lambda_0, z)dz}
&e^{\Trace(\Pi_{\EE^-}\GG^-)X}=\\
&\hat \Delta(\tilde \xi_0, \lambda_0, \eta_0)+o(1),
\end{aligned}
\end{equation}
where $\hat \Delta$ is defined as in \eqref{hatDelta}
and $o(1)$ is $C^1$ with respect to
$\tilde \xi_0$, $\lambda_0$, and $\eta_0$ for each fixed $X$
and $\to 0$ uniformly as $X\to 0$.

By an application of the Implicit Function Theorem, it follows
that existence of a root $(\tilde \xi_0^*, \lambda_0^*,\eta_0^*)$
of $\hat \Delta$ on $\Re \lambda_0>0$ at which 
$\partial_{\lambda_0}\hat \Delta\ne 0$ implies existence of a nearby
root $(\tilde \xi_0^*, \lambda_0^\dagger, \tilde \eta_0^*)$,
$\Re \lambda_0^\dagger>0$, $\rho>0$,
of
$$
\rho^{-1}D_X(\rho \tilde \xi_0,\rho \lambda_0)/ 
e^{\int_0^{-X} \Trace \GG(\rho \tilde \xi_0,\rho \lambda_0, z)dz}
e^{\Trace(\Pi_{\EE^-}\GG^-)X},
$$
hence of $D_X$, for $X$ sufficiently large, or instability
of $\bar U^X$.
Thus, nonvanishing of $\hat \Delta$ on the strictly positive
half-sphere is necessary for stability as $X\to 0$.
\end{proof}

\begin{proof}[Proof of Theorem \ref{msuff}]
The estimate \eqref{expansion} in fact holds 
for all $\eta:=|\hat U'(-X)|$ and $0\le \rho \le Ce^{-\theta X}$,
with the $o(1)$ term uniformly decaying and uniformly $C^0$
as $X\to \infty$ (however, not uniformly $C^1$; see \cite{GMWZ4,GMWZ5,GMWZ6}).
It follows therefore, that nonvanishing of $\hat \Delta$ on 
the (closed) nonnegative half-sphere, implying a lower bound on
$|\hat \Delta|$, implies nonvanishing of $D_X$ on the parameter range
$0\le \rho \le Ce^{-\theta X}$, for $X$ sufficiently large.

If $Ce^{-\theta X}\le \rho << 1$, on the other hand,
a much cruder estimate yields
$$
\begin{aligned}
\rho^{-1}D_X(\rho \tilde \xi_0,\rho \lambda_0)&/ 
e^{\int_0^{-X} \Trace \GG(\rho \tilde \xi_0,\rho \lambda_0, z)dz}
e^{\Trace(\Pi_{\EE^-}\GG^-)X}
=\\
&\qquad
\Delta(\tilde \xi_0, \tilde \lambda_0) 
+ o(1) + O(|\hat U'(-X)|/Ce^{-\theta X})=\\
 &\qquad
\Delta(\tilde \xi_0, \tilde \lambda_0) + o(1),
\end{aligned}
$$
by \eqref{smallness}, again with $o(1)$ uniformly decaying
as $X\to +\infty$.
This implies nonvanishing of $D_X$ on the parameter range
$1>>\rho \ge Ce^{-\theta X}$, for $X$ sufficiently large.

For $\rho$ bounded from below, on the other hand, we have
by the basic convergence result of Theorem \ref{conv} that
$D_X$ is nonvanishing if $\cD$ and $D_-$ are nonvanishing,
i.e., if $\hat U$ and $U\equiv U_-$ are stable.
This completes the proof of the theorem.
\end{proof}

\section{Application to gas dynamics}\label{sec:apps}

We now apply our results to the fundamental example of
compressible gas dynamics, restricting
without loss of generality (by rotational invariance of the
equations) to dimension $d=2$.
Consider the compressible Navier--Stokes equations 
\begin{subequations}\label{eq:ns}
\begin{equation}
\rho_t+ (\rho u)_x +(\rho v)_y=0,\label{eq:mass}
\end{equation}
\begin{equation}
(\rho u)_t+ (\rho u^2+p)_x + (\rho uv)_y=
(2\mu +\eta) u_{xx}+ \mu u_{yy} +(\mu+ \eta )v_{xy},
\label{eq:momentumx}
\end{equation}
\begin{equation}
(\rho v)_t+ (\rho uv)_x + (\rho v^2+p)_y=
\mu v_{xx}+ (2\mu +\eta) v_{yy} + (\mu+\eta) u_{yx},
\label{eq:momentumy}
\end{equation}
\begin{multline}\label{eq:energy}
(\rho E)_t+ (\rho uE+up)_x + (\rho vE+vp)_y\\
=\Big( \kappa T_x + (2\mu+\eta)uu_x + \mu v(v_x+u_y) + \eta uv_y\Big)_x \\
+\Big( \kappa T_y+ (2\mu+\eta)vv_y + \mu u(v_x+u_y) + \eta vu_x\Big)_y
\end{multline}
\end{subequations}
on the half-plane $x\in \RR^+$, $y\in \RR$, 
where $\rho$ is density, $u$ and $v$ are velocities in $x$ and $y$
directions, $p$ is pressure, 
\begin{equation}
E=e+\frac{u^2}{2} +\frac{v^2}{2} .\label{eq:internal_kinetic}
\end{equation}
is total energy density,
$e$ and $T$ are internal energy density and temperature,
and constants $\mu>|\eta|\ge0$ and $\kappa>0$
are coefficients of first (``dynamic'') 
and second viscosity and heat conductivity.

We assume ideal  (``$\gamma$-law'') gas equations of state
\begin{equation}
p_0(\rho,T)= \Gamma \rho e,\quad e_0(\rho,T)=c_vT,
\label{eq:ideal_gas}
\end{equation}
where $c_v>0$ is the specific heat at constant volume,
$\Gamma := \gamma -1> 0$, and
$\gamma 
> 1$ is the adiabatic index
of the gas; 
equivalently,
\begin{equation}\label{rhoS}
p(v,S)=ae^{S/c_v}\rho^{\gamma}, 
\end{equation}
where $S$ is thermodynamical entropy \cite{Ba,Sm}.
In the notation of \eqref{sys}, we have 
\begin{equation}\label{consvar}
\tilde U=(\rho, \rho u, \rho v, \rho E)=:(\rho, m_1,m_2, \cE)
\end{equation}
and
\begin{equation}\label{Fvar}
F_1(\tilde U)=
(\rho u, \rho u^2 +p, \rho v^2, \rho u E)=
(m_1, m_1^2/\rho +p, m_1m_2/\rho, m_1\cE/\rho).
\end{equation}

\begin{rem}\label{values}
\textup{
In the thermodynamical rarified gas approximation, 
\begin{equation}\label{gammaformula}
\gamma=\frac{2n+3}{2n + 1},
\qquad
\nu/\mu= \frac{9\gamma -5}{4}
\qquad
\eta=-\frac{2}{3}\mu
\end{equation}
for $\nu:=\kappa/c_v$,
where $n$ is the number of constituent atoms of gas molecules (here
assumed to have ``tree'' structure) \cite{Ba}, with
$\gamma= 5/3$ and $\gamma= 7/5$ for the main applications of
monatomic and diatomic gas.  In particular, 
\begin{equation}\label{rat}
1<\gamma <2 \quad \hbox{\rm and}\quad \frac{\nu}{2\mu+\eta} >1
\end{equation}
for common gases, a conclusion that is born out by 
experiment.  See Appendices A and B of \cite{HLyZ1} for further discussion.
}
\end{rem}

\subsection{Viscous Shock Profiles}\label{sec:viscous}

From \eqref{eq:ns}, setting time-derivatives to zero, integrating in
$x$, and rearranging, we obtain after a brief calculation
the standing-shock ODE
\begin{equation}\label{idealode}
\begin{aligned}
\hat u'= (2\mu+\eta)^{-1}\Big(m (\hat u-u_-) + \Gamma(\hat \rho \hat e- 
\hat \rho_-e_-)\Big),\\
\hat e'= \nu^{-1}\Big( m  \big(\hat e- e_-\big)
- \frac{m(\hat u-u_-)^2}{2} +(\hat u-u_-)\Gamma\hat \rho_- T_-\Big),
\end{aligned}
\end{equation}
where $m:=\hat \rho \hat u\equiv \const$ and $\hat v\equiv \const$.

Using various scale-invariances of system \eqref{eq:ns},
we may take without loss of generality $m=\rho_-=u_-=1$,
$v_-=v_+=0$, yielding
\begin{equation}\label{midealode}
\begin{aligned}
\hat u'= \frac{1}{2\mu+\eta}\left( (\hat u-1) + 
\Gamma \left(\frac{\hat e}{\hat u}- e_-\right)\right),\\
\hat e'= \nu^{-1}\left((\hat e- e_-)
- \frac{(\hat u-1)^2}{2} +(\hat u-1)\Gamma e_-\right)
\end{aligned}
\end{equation}
with $\hat v\equiv 0$, with endstates
\begin{equation}\label{endstates}
e_+=\frac{u_+\alpha(u_+-1)}{\Gamma(\Gamma+2-\alpha)}, 
\quad
e_-=\frac{(u_+-1)(\Gamma+2)}{\Gamma(\Gamma+2-\alpha)},\quad
\rho_+=1/u_+,
\end{equation}
$\alpha:=\frac{\Gamma+2-\Gamma u_+}{u_+-u_*}$,
parametrized by the single quantity
\begin{equation}\label{phys}
1\ge u_+> u_*:= \frac{\Gamma}{\Gamma + 2}.
\end{equation}
In the strong-shock limit $u_+\to u_*$, $e_-\to 0$, with all other
quantities remaining in physical range;
for details of these computations, see \cite{HLyZ2}, Sections 3--5.

Linearizing \eqref{midealode} about $(u_-,e_-)=(1,e_-)$, we obtain
\begin{equation}\label{linode}
\begin{pmatrix} u\\e \end{pmatrix}'=M_- \begin{pmatrix} u\\e \end{pmatrix},
\qquad
M_-:=
\begin{pmatrix} \frac{1}{2\mu+\eta} & 0\\
 0& \frac{1}{\nu} \end{pmatrix}
\begin{pmatrix} 
1-\Gamma e_- & \Gamma \\ 
\Gamma e_- & 1 \end{pmatrix},
\end{equation}
determining the asymptotic behavior of $(\hat u,\hat e)(z)$ as
$z\to -\infty$.
One may check for all $1\ge u_+> u_*$ that $M_-$ has two positive
distinct real eigenvalues $0< \omega_-\le 1/\nu\le \omega_+$,
$$
\omega_\pm=
\frac{1}{\nu}+
\frac{ \Big( \frac{1-\Gamma e_-}{2\mu+\eta} - \frac{1}{\nu} \Big)
\pm \sqrt{ 
\big( \frac{1-\Gamma e_-}{2\mu+\eta} -\frac{1}{\nu} \big)^2
+ \frac{4\Gamma e_-}{(2\mu+\eta)\nu} }}{2},
$$
with associated eigenvectors 
$s_j=(-1, -\frac{\Gamma e_-}{\nu(\omega_j-1/\nu)})^T$,
merging in the special limiting case $u_+\to u_*$/$e_-\to 0$, $2\mu+\eta=\nu$
to a pair of real semisimple eigenvalues.

That is, for a Lax $1$-shock,
 $U_-$ is a repellor for the standing-wave ODE, and $U_+$ a saddle, 
in agreement with the abstract conclusions of \cite{MaZ3} for extreme shocks
of general systems and of \cite{Gi} for shock
profiles of gas dynamics with general equation of state.
In particular, note that 
$$
\det M_-=(\nu (2\mu+\eta))^{-1}
(1- \Gamma(1+\Gamma) e_-)>0,
$$
with $(1- \Gamma(1+\Gamma) e_-)$ approaching $1$ in the strong shock
limit $u\to u_*$/$e_-\to 0$, and $1- \frac{2(\Gamma+1)}{(\Gamma +2)^2}>
\frac{\Gamma^2+2}{(\Gamma +2)^2}$
in the weak shock limit $u_+\to 1$.\footnote{
This repairs an error of \cite{SZ}, in which $U_-$ was mistakenly 
computed to be a saddle, leading to an incorrect value of $S$.}
By reality and simplicity of the eigenvalues $\omega_j$, we have that limits
\begin{equation}\label{sform}
s:=\lim_{z\to -\infty} (\hat u',\hat e')/ |(\hat u', \hat e')|)
\end{equation}
and
\begin{equation}\label{Sform}
\begin{aligned}
S:=\lim_{z\to -\infty} (\hat U'/|\hat U'|)
&=\frac{\partial  U}{\partial ( u,  e)}|_{U_-} s
=
\begin{pmatrix}
-1&0\\
0&0\\
0&0\\
1/2- e_-& 1
\end{pmatrix}
s\\
\end{aligned}
\end{equation}
exist, with $s$ generically lying parallel to the slow mode $s_-$,
or
\begin{equation}\label{genS}
\begin{aligned}
S&= \begin{pmatrix}
-1&0\\
0&0\\
0&0\\
1/2- e_-& 1
\end{pmatrix}
\begin{pmatrix} -1 \\ -\frac{\Gamma e_-}{\nu(\omega_--1/\nu)} \end{pmatrix}\\
&=
\begin{pmatrix} 1 \\ 0 \\ 0 \\
 e_- \frac{\Gamma -\nu(\omega_--1/\nu)}{\nu(\omega_--1/\nu)}
-\frac{1}{2} \end{pmatrix}.\\
\end{aligned}
\end{equation}

Finally, from \eqref{consvar}--\eqref{Fvar}, we obtain
after a brief calculation
\begin{equation}\label{A1-}
\begin{aligned}
A_1^-:= \partial (F_1/\partial \tilde U)(U_-)
&=
\begin{pmatrix}
0 & 1 & 0 & 0\\
p_\rho -1 +p_e/2 & 2-p_e & 0 & p_e\\
0 & 0 & 1 & 0\\
-1/2 & 1/2 & 0 & 1\\
\end{pmatrix}\\
&=
\begin{pmatrix}
0 & 1 & 0 & 0\\
\Gamma e_- -1 +\Gamma/2 & 2-\Gamma/2 & 0 & \Gamma\\
0 & 0 & 1 & 0\\
-1/2 & 1/2 & 0 & 1\\
\end{pmatrix},
\end{aligned}
\end{equation}
from which we compute
\begin{equation}\label{AS}
A_1^-S =
\begin{pmatrix} 0 \\
\Gamma e_- -1 +\Gamma/2 
+ \Gamma e_- \frac{\Gamma -\nu(\omega_--1/\nu)}{\nu(\omega_--\Gamma /\nu)}
\\ 0\\
 e_- \frac{\Gamma -\nu(\omega_--1/\nu)}{\nu(\omega_--1/\nu)} -1 \end{pmatrix}.
\end{equation}

\begin{equation}\label{[U]}
[U]= \Big(\frac{1-u_+}{u_+}, 0, 0, \frac{1-u_+}{2} \Big)^T.
\end{equation}

\subsubsection{The strong shock limit}
For $\frac{\nu}{2\mu+\eta}<1$, \eqref{genS} converges to
$S=(1,0,0,-1/2)^T$ in the strong shock limit $e_-\to 0$.
For $\frac{\nu}{2\mu+\eta}\ge 1$, however, 
\eqref{genS} becomes singular in the limit as $e_-\to 0$, 
for which also $\omega_-\to 1/\nu$.
To evaluate this limit, it is easier to return to \eqref{linode}
and compute directly with $e_-=0$, to obtain
$s\to (-1, 1-\phi )^T$, 
yielding the general formula
\begin{equation}\label{genlim}
\qquad
S\to (1,0,0, 1/2- \min\{1, \phi\} )^T
\; \hbox{as}\; u_+\to u_-,
\quad
\phi:=\frac{2\mu+\eta}{\nu}.
\end{equation}

Noting that 
\begin{equation}\label{limA}
A_1-\to
\begin{pmatrix}
0 & 1 & 0 & 0\\
 -1 +\Gamma/2 & 2-\Gamma/2 & 0 & \Gamma\\
0 & 0 & 1 & 0\\
-1/2 & 1/2 & 0 & 1\\
\end{pmatrix},
\end{equation}
we thus have
\begin{equation}\label{limAS}
A_1S-\to
\begin{pmatrix}
0\\ 
 \Gamma \max\{0,1-\phi\}-1 \\
0 \\
- \min\{1, \phi \} 
\\
\end{pmatrix}
\end{equation}
and
\begin{equation}\label{lim[U]}
[U]\to \Big(\frac{2}{\Gamma}, 0, 0, \frac{1}{\Gamma+2} \Big)^T,
\end{equation}
completing our asymptotic analysis.

\subsection{One-dimensional stability}\label{sec:oneapp}

Following the treatment in \cite{Se,SZ}, we note
for Lax $1$-shocks that 
\begin{equation}\label{alt}
\det (\cR_+, f)=\ell_+ \cdot f,
\end{equation}
for any vector $f\in \CC^n$, 
where $\ell_+$ is the unique stable left eigenvector of 
$\cA_+$ and $\cdot$ denotes complex
inner product.
In one dimension, $\ell_+$ is just the stable left eigenvector
of $A_+$, which may be computed to be
\begin{equation}\label{1ell}
\begin{aligned}
\ell_+&= 
\Big(p_\rho + cu + \frac{p_e(u^2/2-e)}{\rho},  -\frac{p_eu}{\rho}-c, 
0, \frac{p_e}{\rho}\Big)^T (U_+)\\
&=
\Big(c_+u_+ + \Gamma u_+^2/2,  -\Gamma u_+-c_+, 0, \Gamma \Big)^T,\\
\end{aligned}
\end{equation}
where
\begin{equation}\label{c}
c:=
\sqrt{pp_e/\rho^2 +p_\rho}
=
\sqrt{\Gamma (\Gamma+1)e}
\end{equation}
denotes sound speed.
This computation is most easily accomplished
by working in the more convenient nonconservative
coordinates $(\rho, u,v,e)$,
which are related to conservative variables $(\rho, \rho u, \rho v, \rho
(e+u^2/2+v^2/2))$ by a readily computed lower triangular change of coordinates;
see \cite{Se} or Appendix \ref{ellcomp}.

Combining all facts, we have
\begin{equation}\label{hatdeltaform}
\begin{aligned}
\hat \delta&= \ell_+\cdot A_-S \\
&=\Big(-\Gamma u_+-c_+\Big)
\Big(\Gamma e_- -1 +\Gamma/2 + 
\Gamma e_- \frac{\Gamma -\nu(\omega_--1/\nu)}{\nu(\omega_--\Gamma /\nu)}\Big)
\\
&\quad
+
\Gamma
\Big(e_- \frac{\Gamma -
\nu(\omega_--1/\nu)}{\nu(\omega_--1/\nu)} -1 \Big).
\end{aligned}
\end{equation}
It is readily verified on the other hand that 
\begin{equation}\label{sgnnorm}
\delta>0;
\end{equation}
see Section \ref{1dstablim} just below.
The one-dimensional stability condition \eqref{1Dstabcond}
thus reduces in this case to 
\begin{equation}\label{ns1stab}
\sgn \hat \delta>0,
\end{equation}
a condition that can be readily checked numerically using \eqref{hatdeltaform}.


\subsubsection{The strong shock limit}\label{1dstablim}
In the strong shock limit $u_+\to u_*$, 
we have $e_-\to 0$, 
$\alpha\to +\infty$, and 
$$
e_+\to u_*(1-u_*)/\Gamma=2/(\Gamma+2)^2 , 
$$
so that $c_+\to \sqrt{2\Gamma(\Gamma+1)}/(\Gamma+2)$ and
$$
\ell_+\to \Big(
\frac{\Gamma \sqrt{2\Gamma(\Gamma+1)}}{(\Gamma+2)^2}
+ \frac{\Gamma^3 }{2(\Gamma+2)^2},
-\frac{\Gamma^2 }{\Gamma+2}- \frac{\Gamma \sqrt{2\Gamma(\Gamma+1)}}{\Gamma+2}, 
0, \Gamma \Big)^T,
$$
hence, by \eqref{limAS},
\begin{equation}\label{limdel}
\begin{aligned}
\hat \delta&= \ell_+\cdot A_-S\\
&\to 
\Big(
-\frac{\Gamma^2 }{\Gamma+2}- 
\frac{\Gamma \sqrt{2\Gamma(\Gamma+1)}}{\Gamma+2}\Big) 
 \Big(\Gamma \max\{0,1-\phi \}-1 \Big)
-\Gamma \min\{1, \phi \}.
\end{aligned}
\end{equation}
Meanwhile,
$$
\delta=\ell_+\cdot [U]\to
\frac{2\sqrt{2\Gamma(\Gamma+1)}}{(\Gamma+2)^2}
+ \frac{\Gamma }{(\Gamma+2)^2}
+ \frac{\Gamma}{\Gamma+2}>0,
$$
from which we may conclude by homotopy/nonvanishing of $\delta$ 
that $\delta>0$ for all $1\ge u_+\ge u_*$, verifying 
\eqref{sgnnorm}--\eqref{ns1stab}.

{\bf The case $\phi \ge 1$.}
For $\phi \ge 1$, \eqref{ns1stab} becomes
$$
\Big(
\frac{\Gamma \sqrt{2\Gamma(\Gamma+1)}}{\Gamma+2}
+ \frac{\Gamma^2 }{\Gamma+2}\Big)
-\Gamma  >0,
$$
or
$$
\sqrt{2\Gamma(\Gamma+1)}> 2,
$$
which evidently fails for $\Gamma $ in the kinetic range
$0\le \Gamma \le 1$ (indeed, for all $\Gamma $ outside $(1,2)$).
Thus, we may conclude instability in the strong shock limit in this range.

{\bf The case $\phi \le 1$.}
For $\phi \le 1$, \eqref{ns1stab} becomes
$$
\Big(
-\frac{\Gamma^2 }{\Gamma+2}- 
\frac{\Gamma \sqrt{2\Gamma(\Gamma+1)}}{\Gamma+2}\Big) 
 \Big(\Gamma (1-\phi )-1 \Big)
-\Gamma  \phi  >0,
$$
or
$$
\Big(
\sqrt{2\Gamma(\Gamma+1)} +\Gamma \Big) 
 \Big(1-\Gamma (1-\phi ) \Big)
-(\Gamma+2)  \phi  >0.
$$
Defining 
$\sigma:= \sqrt{2\Gamma(\Gamma+1)} + \Gamma $,
we may rewrite this as
$$
 \sigma(1-\Gamma  )
>(\Gamma- \Gamma \sigma +2)  \phi ,
$$
or, assuming 
$\Gamma(1-\sigma) +2>0$, as holds for example on the kinetic
range $0<\Gamma < 1$, or $1<\gamma<2$ (on which $\sigma < 2+\Gamma$,
so $2+\Gamma > \Gamma \sigma$), as
\begin{equation}\label{limcrit}
\phi  <
 \frac{\sigma(1-\Gamma  )}{
\Gamma(1-\sigma) +2},
\end{equation}
which is satisfied for $\phi $ small enough, but for $\phi =1$,
hence for $\phi \le 1$ large enough, is not satisfied, by the
analysis of case $\phi =1$ above.

{\bf Common gases and the kinetic approximation.}
Recall that for common gases, $\phi $ is less than one.
For gases obeying the kinetic
approximation \eqref{gammaformula}--\eqref{rat},
\begin{equation}\label{phiform}
\phi =\frac{16}{27 \Gamma + 12},
\end{equation}
so that $\phi <1$ for $\Gamma \ge 4/27\approx  .148$, in particular
for $n$-atomic gases with $n\le 5$.
Thus, it is the case $\phi \le 1$ that is relevant to
typical applications.
Substituting \eqref{phiform} into \eqref{limcrit}
and noting that $\sqrt{2\Gamma (\Gamma+1)}\le \Gamma +1$
for $0<\Gamma<1$ yields the necessary condition
\begin{equation}\label{kincrit}
16(\Gamma +2)< (2\Gamma +1)(1+15\Gamma),
\end{equation}
or $0<(\Gamma -1)(30\Gamma +31)$, which is violated
for the entire kinetic range $0<\Gamma < 1$.

{\bf Conclusions}
By Remarks \ref{SZrmk}.2 and \ref{mSZrmk}.2, boundary layers
are both one- and multi-dimensionally stable in the standing
shock limit for limiting shocks of sufficiently small amplitude,
i.e., $1-u_+$ sufficiently small.
By the calculations above, however, for typical gas laws,
they are not even one-dimensionally stable in the strong shock 
limit for limiting shocks of sufficiently large amplitude,
i.e., $u_+-u_*$ sufficiently small, even though the corresponding
shock is perfectly stable \cite{HLyZ1,HLyZ2}.

Thus, we have the striking conclusion that
{\it for (all!) 
typically physically occurring gases under inflow Dirichlet boundary
conditions, there is a transition from stability to instability
of boundary layers in the standing shock limit as the amplitude
of the limiting shock increases from zero to its maximum value.}

\subsection{Multi-dimensional stability}\label{sec:mapp}

The computation of $\ell_+(\tilde \xi, \lambda)$ in multi-dimensions
may be found, for example, in Appendix C, \cite{Z3}\footnote{
Contributed by K. Jenssen and G. Lyng}, where it is computed as
\begin{equation}\label{gmell}
\ell_+(\tilde \xi, \lambda)=
\Big(
\theta - \frac{ic\beta u}{\sqrt{\tilde \xi^2-\beta^2}} +\frac{\eta u^2}{\beta},
\frac{ic\beta }{\sqrt{\tilde \xi^2-\beta^2}} -\eta u,
\frac{c\tilde \xi}{\sqrt{\tilde \xi^2-\beta^2}},
\eta
\Big)_+,
\end{equation}
where
$\theta:=p_\rho -\frac{p_ee}{\rho}= 2\Gamma e$,
$\eta:=\frac{p_e}{\rho}= \Gamma$,
and $c$ is sound speed \eqref{c},
or
\begin{equation}\label{mell}
\ell_+(\tilde \xi, \lambda)=
\Big(
2\Gamma e_+ - \frac{ic_+\beta_+ u_+}{\sqrt{\tilde \xi^2-\beta_+^2}} +\frac{\Gamma u_+^2}{\beta_+},
\frac{ic_+\beta_+ }{\sqrt{\tilde \xi^2-\beta_+^2}} -\Gamma u_+,
\frac{c_+\tilde \xi}{\sqrt{\tilde \xi^2-\beta_+^2}},
\Gamma
\Big),
\end{equation}
where
\begin{equation}\label{betaform}
\beta:=\frac{-u\lambda - \sqrt{\lambda^2 
+\tilde \xi^2(c^2-u^2)}}{c^2-u^2}.
\end{equation}
Together with our computation of $A_1^-S$ in \eqref{AS},
this determines $\ell_+\cdot A_1^1S$.
Meanwhile, $\Delta:=\ell_+\cdot (\lambda[U]+i\tilde \xi[F_2(U)])$
is computed for the same choice of $\ell_+$ in
Appendix C, \cite{Z3} (equation displayed below C.36),
thus determining $\hat \eta(\tilde \xi, \lambda)=
\ell_+\cdot A_1^1S/\Delta(\tilde \xi, \lambda).$

With Proposition \ref{verprop}, this gives a straightforward
means of numerical determination of multidimensional instability,
by plotting the image of $\hat \eta(1, i\tau)$ as $\tau$ ranges
over the real axis and checking whether or not this curve strikes
the nonnegative real axis; however, we shall not carry this out here.

The numerical determination of one- and multi-dimensional stability
transitions for ideal and other gas laws would be interesting
problems for further investigation.
A further very interesting open open problem is to 
determine analytically the stability transitions
as was done for the inviscid shock problem (involving only
$\Delta$) in \cite{Er,M}; see Appendix C, \cite{Z3}.

%
%
%

\appendix
\section{Computation of $\ell_+$ in one dimension}\label{ellcomp}

In this appendix, we carry out for completeness the computation
of $\ell_+$ for the one-dimensional Navier--Stokes equations,
verifying \eqref{1ell}.
In variables $(\rho, u,v,e)$, the quasilinear hyperbolic part
of the equations becomes
\begin{align}
\begin{split}
& \rho_t+ q\cdot \nabla \rho + \rho \div q =0,\\
& q_t+ q\cdot \nabla q + \rho^{-1} p_\rho \nabla \rho+
\rho^{-1} p_e \nabla e=0,\\
& e_t+ q\cdot \nabla e + \rho^{-1} p \div u=0,\\
\end{split}
\end{align}
where $q=(u,v)$ denotes velocity, or, in one dimension,
$$
V_t + (u Id + M)V_{x_1}=0,
$$
where $V=(\rho, u,v,e)$ and
$$
M:=
\begin{pmatrix}
0 & \rho & 0 & 0\\
\rho^{-1}p_\rho & 0 & 0 & \rho^{-1}p_e\\
0 & 0 & 0 & 0\\
0 & \rho^{-1}p & 0 & 0\\
\end{pmatrix},
$$
from which we may conclude that
$ A_1= S(u Id + M)S^{-1} $ for
$$
S:=
\frac{\partial \big(\rho, \rho u, \rho v, \rho(e+u^2/2+v^2/2)\big)}
{\partial (\rho, u, v, e)}
=
\begin{pmatrix}
1 & 0& 0 & 0\\
u & \rho & 0 & 0\\
v & 0 &  \rho & 0 \\
e+ u^2/2 + v^2/2 & \rho u & \rho v & \rho\\
\end{pmatrix},
$$
$$
S^{-1}:=
\begin{pmatrix}
1 & 0& 0 & 0\\
\frac{-u}{\rho} & \frac{1}{\rho} & 0 & 0\\
\frac{-v}{\rho} & 0 &  \frac{1}{\rho} & 0 \\
\frac{-e+ u^2/2 + v^2/2}{\rho} & \frac{-u}{\rho}  & \frac{-v}{\rho} & \frac{1}{\rho}\\
\end{pmatrix},
$$
and thus
$\ell_+^*=\tilde \ell_+^* S^{-1}$ for $\tilde \ell_+$ defined as the
left eigenvector of $M$ associated with the eigenvalue of smallest real part,
$*$ denoting adjoint, or congugate transpose, all quantities to
be evaluated at $(\rho, u,v, e)=(1/u_+, u_+, 0, e_+)$.

By inspection, $\tilde \ell_+^*=(p_\rho, -\rho c, 0, p_e)$ for
$v=0$, where sound speed $c$ is defined as in \eqref{c},
whence 
$$
\ell_+^*=
\tilde \ell_+^* S^{-1}=
\Big(p_\rho + cu + \frac{p_e(u^2/2-e)}{\rho},  -\frac{p_eu}{\rho}-c, 
0, \frac{p_e}{\rho}\Big) (U_+)
$$
as claimed.

\end{document}